\newtheorem{thm}{Theorem}
\newtheorem{lem}[thm]{Lemma}
\newtheorem{defi}[thm]{Definition}
\newtheorem{prop}[thm]{Proposition}
\newenvironment{preuve}{\vip \noindent {\it Proof}}{\hfill$\square$\vip}
\newcommand{\vip}{\vskip.2cm}
\newcommand{\rr}{{\mathbb{R}}}
\newcommand{\tN}{{\tilde N}}
\newcommand{\tQ}{{\tilde Q}}
\newcommand{\tY}{{\tilde Y}}
\newcommand{\tX}{{\tilde X}}
\newcommand{\tI}{{\tilde I}}
\newcommand{\tK}{{\tilde K}}
\newcommand{\tL}{{\tilde L}}
\newcommand{\tJ}{{\tilde J}}
\newcommand{\tx}{{\tilde x}}
\newcommand{\ty}{{\tilde y}}
\newcommand{\E}{\mathbb{E}}
\newcommand{\cF}{{\mathcal F}}
\newcommand{\cG}{{\mathcal G}}
\newcommand{\intot}{\int_0^t}
\newcommand{\intrs}{\int_{\rr_*}}
\newcommand{\sm}{{s-}}
\newcommand{\indiq}{{{\bf 1}}}
\newcommand{\ala}{ \\ \nonumber }
\newcommand{\sg}{{\rm sign}}
\begin{document}

\title[Stable-driven SDEs]
{On pathwise uniqueness for stochastic differential equations
driven by stable L\'evy processes}

\author{Nicolas Fournier}

\address{Nicolas Fournier, LAMA UMR 8050,
Facult\'e de Sciences et Technologies,
Universit\'e Paris Est, 61 avenue du G\'en\'eral de Gaulle, 94010 Cr\'eteil 
Cedex, France, 
email: {\tt nicolas.fournier@univ-paris12.fr}}

\begin{abstract}
We study a one-dimensional stochastic differential equation
driven by a
stable L\'evy process of order $\alpha$ with drift and diffusion coefficients
$b,\sigma$.
When $\alpha\in (1,2)$, we investigate pathwise uniqueness for this equation.
When $\alpha\in (0,1)$, we study another
stochastic differential equation, which is equivalent in law, but for which
pathwise uniqueness holds under much weaker conditions.
We obtain various results, depending on whether $\alpha\in (0,1)$
or $\alpha \in (1,2)$ and on whether the driving stable process is symmetric or not.
Our assumptions involve the regularity and monotonicity of
$b$ and $\sigma$.
\end{abstract}

\maketitle

\textbf{Mathematics Subject Classification (2000)}: 60H10, 60H30, 60J75.

\textbf{Keywords}: Stable processes, Stochastic differential equations
with jumps.

\section{Introduction and results}

For $a_-,a_+$ in $[0,\infty)$ and $\alpha \in (0,2)\setminus\{1\}$,
we consider the measure  on $\rr_*$:
\begin{align}\label{levymeas}
\nu_{a_-,a_+}^\alpha(dz)= |z|^{-\alpha-1}
[a_- \indiq_{\{z<0\}} + a_+ \indiq_{\{z>0\}}]dz.
\end{align}
Let now  $N(dsdz)$ be a Poisson measure on $[0,\infty)\times \rr_*$ with
intensity measure $ds \; \nu_{a_-,a_+}^\alpha(dz)$. Setting 
\begin{align}\label{stable}
\left\{ \begin{array}{l}
Z_t =\intot \intrs z N(dsdz) \quad \hbox{if } \alpha\in (0,1), \\
Z_t = \intot \intrs z \tN(dsdz)  \quad \hbox{if } \alpha\in (1,2),
\end{array} \right.
\end{align}
the process  $(Z_t)_{t\geq 0}$ is a {\it stable process} of order
$\alpha$ with parameters $a_-,a_+$, or a $(\alpha,a_-,a_+)$-stable process
in short. It is said to be {\it symmetric} if $a_-=a_+$.
Here $\tN$ stands for the
compensated Poisson measure, see Jacod-Shiryaev \cite[Chapter II]{js}.
We refer to Bertoin \cite{ber} and Sato \cite{sa} 
for many details on stable processes.
We consider, for some measurable functions
$\sigma,b:\rr\mapsto \rr$, the S.D.E.
\begin{align}\label{sde}
X_t &= x + \intot \sigma(X_\sm)dZ_s  + \intot b(X_s)ds.
\end{align}
Our aim in this paper is to investigate pathwise uniqueness
for this equation. 
Let us recall briefly the known results on this topic.

\vip

$\bullet$ Pathwise uniqueness classically holds when 
$b,\sigma$ are both Lipschitz-continuous, see e.g. Ikeda-Watanabe 
\cite[Chapter 4]{iw},
Protter \cite[Chapter 5]{p}.

$\bullet$ When $\alpha\in (1,2)$, $a_+=a_-$ and $b=0$,
Komatsu \cite{k} has shown pathwise
uniqueness if $\sigma$ is H\"older-continuous with
index $1/\alpha$, see also Bass \cite{b1}.

$\bullet$ Bass-Burdzy-Chen \cite{bbc} have proved that the above
results are sharp: if $a_-=a_+$ and $b=0$, for any 
$\beta<\min(1,1/\alpha)$,
one can find a function $\sigma$, H\"older-continuous with index $\beta$,
bounded from above and from below, such that 
pathwise uniqueness fails for (\ref{sde}).

\vip

We refer to the review paper of Bass \cite{b2} for many 
more details on the subject and to Situ \cite{s} for a book on 
general S.D.E.s with jumps.

\subsection{Preliminaries}
When $\alpha \in (1,2)$, we will study the S.D.E. (\ref{sde}).
When $\alpha\in (0,1)$, we will rather study the following equation:
for $M(dsdzdu)$ a Poisson measure on 
$[0,\infty)\times \rr_*\times \rr_*$ with intensity measure
$ds\; \nu_{a_-,a_+}^\alpha(dz)\; du$,
\begin{align}\label{sde2}
Y_t=&x+\int_0^t\! \intrs \! \intrs \! z [\indiq_{\{0<u < \gamma(Y_\sm)\}} 
- \indiq_{\{\gamma(Y_\sm)<u<0 \}}]
M(dsdzdu) 
+ \intot \! b(Y_s)ds,
\end{align}
where $\gamma(x)=\sg(\sigma(x)).|\sigma(x)|^\alpha$.
This equation is equivalent, in law, to (\ref{sde}). 
It has to be seen as another representation of (\ref{sde}).

\begin{lem}\label{eq}
Let $\alpha\in (0,1)$ and $a_-,a_+ \in [0,\infty)$.

(i) Let $(Y_t)_{t\geq 0}$ solve (\ref{sde2}). There exists a
$(\alpha,a_-,a_+)$-stable process $(Z_t)_{t\geq 0}$ such that
$(Y_t)_{t\geq 0}$ solves (\ref{sde}).

(ii) Let $(X_t)_{t\geq 0}$
solve (\ref{sde}). There exists, on an enlarged probability space,
a Poisson
measure $M$ on 
$[0,\infty)\times \rr_*\times \rr_*$ with intensity measure
$ds\; \nu_{a_-,a_+}^\alpha(dz)\; du$ such that
$(X_t)_{t\geq 0}$ solves 
(\ref{sde2}).
\end{lem}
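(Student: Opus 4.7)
The two SDEs have identical drifts, so only the jump structures need to be matched: the game is to identify $\sigma(Y_{s-})\Delta Z_s$ with $\Delta Y_s$ at every jump time. The main tool is the scaling identity $(c\cdot)_* \nu^\alpha_{a_-,a_+} = c^\alpha \nu^\alpha_{a_-,a_+}$ for $c > 0$, combined with the standard mapping and marking theorems for Poisson random measures. Because $\alpha<1$, the stable integral in (\ref{sde}) is the absolutely convergent sum $\sum_{s\le t}\sigma(X_{s-})\Delta Z_s$, so no compensation issue arises.

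For (i), I would enlarge the probability space to carry an independent $(\alpha,a_-,a_+)$-stable process $Z'$ (needed to fill in the null set $\{s : \sigma(Y_{s-}) = 0\}$) and set
\begin{align*}
Z_t := \int_0^t\!\!\int_{\rr_*}\!\!\int_{\rr_*} \frac{z}{\sigma(Y_{s-})}\bigl[\indiq_{0<u<\gamma(Y_{s-})} - \indiq_{\gamma(Y_{s-})<u<0}\bigr]\, M(ds\,dz\,du) + \int_0^t \indiq_{\sigma(Y_{s-})=0}\, dZ'_s,
\end{align*}
with the first integrand set to $0$ on $\{\sigma(Y_{s-})=0\}$. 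The main verification is that $Z$ is $(\alpha,a_-,a_+)$-stable, which I would establish by computing the intensity of its jump measure: at times with $\sigma(Y_{s-}) = c \ne 0$, the $u$-slab $(0 \wedge \gamma, 0 \vee \gamma)$ of $M$ contributes intensity $|c|^\alpha \nu^\alpha(dz)$ per unit $ds$, and the positive scaling $z \mapsto z/|c|$ (which is what the integrand effectively amounts to, the $\pm$ in the indicator cancelling the sign of $\sigma$) contributes a factor $|c|^{-\alpha}$, giving intensity $ds\,\nu^\alpha$; on the zero set of $\sigma(Y_\cdot)$, the intensity $ds\,\nu^\alpha$ is contributed by $Z'$. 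A signwise check confirms $\sigma(Y_{s-})\Delta Z_s = \Delta Y_s$, hence $Y$ solves (\ref{sde}) with this $Z$.

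For (ii), I would run the construction in reverse. Enlarge the probability space with an independent Poisson measure $\Pi$ on $[0,\infty)\times\rr_*\times\rr_*$ of intensity $ds\,\nu^\alpha(dz)\,du$ and, for each atom $(s,z)$ of $N$ with $\sigma(X_{s-}) \ne 0$, an independent mark $U_s$ uniform on the interval $(0\wedge\gamma(X_{s-}), 0\vee\gamma(X_{s-}))$. Declare $(s,\, |\sigma(X_{s-})|\,z,\, U_s)$ an atom of $M$, and complete $M$ with the atoms of $\Pi$ lying outside the corresponding $u$-strip (together with all of $\Pi$ at times when $\sigma(X_{s-}) = 0$). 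The same scaling identity verifies that $M$ has intensity $ds\,\nu^\alpha(dz)\,du$, and a case-by-case check on the sign of $\sigma(X_{s-})$ shows that the process defined from $M$ via (\ref{sde2}) has jumps equal to those of $X$.

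The only real subtlety is the sign-tracking when $\sigma < 0$ --- there $\gamma < 0$, the $-\indiq$ term fires, and one must check that the two sign-flips cancel so that no swap of $a_-,a_+$ creeps in --- together with the handling of the degenerate set $\{\sigma = 0\}$, which forces the enlargements above. Once these are in hand, the lemma follows from the scaling property of $\nu^\alpha$ under positive dilations and routine applications of the mapping and marking theorems for Poisson random measures.
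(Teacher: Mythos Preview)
Your approach is correct and essentially matches the paper's: both directions exploit the scaling identity $(z\mapsto z/|c|)_*\nu^\alpha_{a_-,a_+}=|c|^{-\alpha}\nu^\alpha_{a_-,a_+}$ together with a predictable transformation of a Poisson measure, and the Poisson property is verified by computing the compensator. One small refinement worth noting: in (i) the paper avoids enlarging the probability space by taking the slab $\{0<u<1\}$ of $M$ itself (in place of your independent $Z'$) to supply the stable increments on $\{\sigma(Y_{s-})=0\}$, which is slightly cleaner since the statement of (i), unlike (ii), does not allow for an enlargement.
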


Let us finally recall the following existence result.

\begin{prop}\label{weak}
Let $\alpha \in (0,2)\setminus\{1\}$ and $a_-,a_+ \in [0,\infty)$. 
Assume that $\sigma,b$ have at most linear growth.

(i) If $b,\sigma$ are continuous, there is weak existence for (\ref{sde}).

(ii) For any  solution to (\ref{sde}), any $\beta \in (0,\alpha)$, 
any $T>0$, $\E[\sup_{[0,T]} |X_t|^\beta]<\infty$.
\end{prop}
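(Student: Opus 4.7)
The plan is to prove (ii) first, for an arbitrary solution of (\ref{sde}), and then use it as the tightness input for a standard smooth-approximation proof of (i).

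For (ii), the strategy is localization combined with a small/large-jump decomposition. Fix $\beta\in(0,\alpha)$ and set $\tau_n=\inf\{t\geq 0:|X_t|\geq n\}$; it suffices to show that $u_n(t):=\E[\sup_{s\leq t\wedge\tau_n}|X_s|^\beta]$ is bounded by a function of $t$ independent of $n$, for then $\tau_n\to\infty$ almost surely and Fatou gives the conclusion. Split $Z$ according to whether $|z|\leq 1$ or $|z|>1$. The key arithmetic fact is $\int_{|z|>1}|z|^\beta\nu_{a_-,a_+}^\alpha(dz)<\infty$ iff $\beta<\alpha$, so the large-jump contribution can be estimated directly using $(a+b)^\beta\leq a^\beta+b^\beta$ when $\beta\leq 1$, or Kunita's $L^\beta$-inequality when $\beta\in[1,\alpha)$ (which forces $\alpha\in(1,2)$), together with the linear growth of $\sigma$, to yield a bound of the form $C\int_0^t(1+\E|X_{s\wedge\tau_n}|^\beta)ds$. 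The small-jump martingale has a finite $L^2$-bracket since $\int_{|z|\leq 1}z^2\nu_{a_-,a_+}^\alpha(dz)<\infty$ for every $\alpha<2$, so the Burkholder-Davis-Gundy inequality plus the linear growth of $\sigma$ and the stopping-time truncation gives an $L^2$-bound that transfers to an $L^\beta$-bound via H\"older. The drift is handled identically by linear growth. Any surviving compensator (e.g.\ $\int_{|z|>1}z\,\nu(dz)$ when $\alpha>1$ after decompensating the large-jump part, or $\int_{|z|\leq 1}z\,\nu(dz)$ when $\alpha<1$) is finite and absorbed into the drift. Putting everything together yields an inequality $u_n(t)\leq C_T+C_T\int_0^t u_n(s)\,ds$, from which Gronwall closes uniformly in $n$.

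For (i), let $b_k,\sigma_k$ be smooth Lipschitz approximations of $b,\sigma$ converging locally uniformly and sharing the same linear-growth constant (smooth truncation followed by mollification). The classical Lipschitz theory of L\'evy-driven SDEs (\cite{iw,p}) provides strong solutions $X^k$. Part (ii), whose proof used only continuity and linear growth, applies to each $X^k$ with a uniform constant, giving $\sup_k\E[\sup_{[0,T]}|X^k_t|^\beta]<\infty$. Combined with the analogous estimate on $\E[|X^k_{\tau+h}-X^k_\tau|^\beta\wedge 1]$ for stopping times $\tau\leq T$ (obtained by the same jump split), this verifies Aldous's criterion and yields tightness of $(X^k)$ in the Skorokhod space $D([0,T],\rr)$. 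After extracting a subsequence converging in law and invoking Skorokhod's representation theorem, one identifies the limit as a weak solution of (\ref{sde}) through its martingale-problem formulation: the generator of (\ref{sde}) applied to smooth compactly-supported test functions is a continuous functional of the path, so the martingale property passes to the limit using continuity of $b,\sigma$ and the uniform moment bound.

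The main obstacle is (ii). Since $Z$ has no moment of order $\geq\alpha$, no exponent can be loosened along the argument; the split at $|z|=1$, with BDG on the small side and sharp subadditivity on the large side, is essentially forced, and the stopping time must be carried throughout to keep the $L^2$-norms finite. A secondary delicate point in (i) is the identification of the limit, which rests on the martingale-problem formulation, since stochastic integrals against Poisson measures are not continuous functionals of the driving path in the Skorokhod topology.
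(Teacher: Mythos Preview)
Your argument for (ii) has a genuine gap in the treatment of the small-jump martingale. You write that BDG plus the stopping-time truncation gives an $L^2$-bound which then transfers to $L^\beta$ via H\"older, and that all the pieces assemble into $u_n(t)\leq C_T+C_T\int_0^t u_n(s)\,ds$ with $C_T$ independent of $n$. But the $L^2$-bound you obtain is
\[
\E\Big[\sup_{s\leq t\wedge\tau_n}|M_s|^2\Big]\;\leq\; C\int_0^t \E\big[\sigma(X_{s\wedge\tau_n})^2\big]\,ds\;\leq\; C\,t\,(1+n^2),
\]
since linear growth and the truncation only give $\E[\sigma(X_{s\wedge\tau_n})^2]\leq C(1+n^2)$; after H\"older this becomes $C\,t^{\beta/2}(1+n^2)^{\beta/2}$. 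This is \emph{not} of the form $C_T+C_T\int_0^t u_n(s)\,ds$, and no manipulation recovers $u_n$ on the right: you would need to control $\E[|X_{s\wedge\tau_n}|^2]$ by $\E[\sup_{r\leq s}|X_r|^\beta]$, which goes the wrong way. The alternative routes (BDG at level $\beta$ with the bracket bounded by $\sum|\Delta M_s|^\beta$, or direct subadditivity when $\alpha<1$) all run into $\int_{|z|\leq 1}|z|^\beta\,\nu(dz)=\infty$ since $\beta<\alpha$. So the small-jump term does not close the Gronwall loop as stated, and the localization does not drop out.

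The paper handles (ii) by a genuinely different mechanism that sidesteps this obstruction. It writes the large-jump part as $\sum_{k\geq 1}\sigma(X_{T_k-})Z_k\,\indiq_{\{T_k\leq t\}}$ with $(T_k)$ the arrival times of a Poisson process and observes that on each interval $[T_k,T_{k+1})$ the solution coincides with a solution of the \emph{small-jump-only} SDE, for which a standard $L^2$-Gronwall yields $\E[\sup Y^2]\leq C_T(1+|\hbox{start}|^2)$, hence $\E[\sup|Y|^\beta\mid\cG]\leq K_T(1+|\hbox{start}|^\beta)$ with $\cG=\sigma(T_1,T_2,\dots)$. At each $T_k$ one has $|X_{T_k}|^\beta\leq L(1+|X_{T_k-}|^\beta)(1+|Z_k|^\beta)$ with $\E[|Z_k|^\beta]<\infty$ because $\beta<\alpha$. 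This yields a recursion $u_{k+1}\leq M_T(1+u_k)$ in the conditional $L^\beta$-norms, whence $u_k\leq A_T^{k+1}$; summing against the Poisson weights $\Pr[T_k<T<T_{k+1}]=e^{-\lambda T}(\lambda T)^k/k!$ gives a finite unconditional bound. The point is that the small-jump equation has honest second moments, so one never needs an $L^\beta$-Gronwall for the martingale part.

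Your approach to (i) via Lipschitz approximation, Aldous tightness and the martingale problem is a legitimate alternative to the paper's route (which cites Situ for the truncated equation and then interlaces), but it rests on the uniform moment bound from (ii), so you need to repair that first.
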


These results must be standard, but we found no precise reference.
The weak existence is almost contained in Situ \cite[Theorem 175]{s}.

\subsection{The case where $\alpha\in (1,2)$.}

This subsection is devoted to the study of (\ref{sde}) when $\alpha \in (1,2)$.
We first introduce some notation.

\begin{lem}\label{acos0}
For $\alpha \in (1,2)$, set $a=\cos(\pi \alpha) \in (-1,1)$.
Then for $c \in [0,1]$,
$$
\beta(\alpha,c):= \frac{1}{\pi}\arccos 
\left( \frac{c^2(1-a^2)-(1+ca)^2}{c^2(1-a^2)+(1+ca)^2}
\right) \in [\alpha-1,1].
$$
There holds $\beta(\alpha,0)=1$, $\beta(\alpha,1)=\alpha-1$
and $\beta(\alpha,c)\in(\alpha-1,1)$ for $c\in (0,1)$.
\end{lem}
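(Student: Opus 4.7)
The plan is to denote the quantity inside the $\arccos$ by $f(c)$, establish that it maps $[0,1]$ strictly increasingly into $[-1,-a]$, and then transport everything through the (strictly decreasing) $\arccos$.

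First I would compute the endpoint values by direct substitution. At $c=0$ the numerator collapses to $-1$ and the denominator to $1$, giving $f(0)=-1$ and $\beta(\alpha,0)=\tfrac{1}{\pi}\arccos(-1)=1$. At $c=1$, factoring $(1+a)$ out of both the numerator $(1-a^2)-(1+a)^2$ and the denominator $(1-a^2)+(1+a)^2$ yields $f(1)=-a=-\cos(\pi\alpha)$. Since $\alpha-1\in(0,1)$, we have $\pi(\alpha-1)\in(0,\pi)$ with $\cos(\pi(\alpha-1))=-\cos(\pi\alpha)=-a$, so $\beta(\alpha,1)=\alpha-1$.

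Next I would prove strict monotonicity of $f$ on $[0,1]$. Write $f(c)=(u-v)/(u+v)$ with
$$u=c^2(1-a^2)\ge 0, \qquad v=(1+ca)^2>0,$$
the positivity of $v$ following from $|a|<1$ and $c\in[0,1]$, which gives $1+ca\ge 1-|a|>0$. Since the Möbius map $t\mapsto (t-1)/(t+1)$ is increasing on $[0,\infty)$, it suffices to show that $u/v$ is increasing in $c$. Write $u/v=h(c)^2$ where
$$h(c)=\frac{c\sqrt{1-a^2}}{1+ca}\ge 0,$$
so it is enough to verify $h'(c)>0$. A direct calculation gives
$$h'(c)=\frac{\sqrt{1-a^2}(1+ca)-c\sqrt{1-a^2}\,a}{(1+ca)^2}=\frac{\sqrt{1-a^2}}{(1+ca)^2}>0,$$
so $h$, and hence $f$, is strictly increasing on $[0,1]$. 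Combined with $f(0)=-1$ and $f(1)=-a$, this shows $f([0,1])\subset[-1,-a]\subset[-1,1]$, so $\beta(\alpha,c)$ is well-defined, and since $\arccos$ is strictly decreasing, $\beta(\alpha,\cdot)$ is strictly decreasing from $1$ to $\alpha-1$, giving $\beta(\alpha,c)\in(\alpha-1,1)$ for $c\in(0,1)$.

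There is no real obstacle here; the only point requiring any care is spotting the substitution $h(c)=c\sqrt{1-a^2}/(1+ca)$, which turns the ratio $u/v$ into a square of a manifestly monotonic function and sidesteps the need to differentiate $f$ directly.
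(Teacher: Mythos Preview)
Your proof is correct. Both you and the paper compute the endpoints $\beta(\alpha,0)=1$ and $\beta(\alpha,1)=\alpha-1$ the same way. For the interior range, however, the paper does not argue via monotonicity: it simply fixes $c\in(0,1)$ and checks directly that $b:=f(c)\in(-1,-a)$, observing that $b>-1$ is immediate once $c>0$, and reducing $b<-a$ to the inequality $c^2(1+a)^2<(1+ca)^2$, which holds since $c<1$ and $|a|<1$. Your route instead establishes the stronger fact that $f$ is strictly increasing on $[0,1]$ via the factorisation $u/v=h(c)^2$ with $h(c)=c\sqrt{1-a^2}/(1+ca)$; this buys you the additional conclusion that $\beta(\alpha,\cdot)$ is strictly decreasing, at the cost of a slightly longer argument than the paper's two-line check.
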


We may assume that $a_-\leq a_+$ without loss of generality: if
$a_->a_+$, write $\sigma(X_\sm)dZ_s=\tilde\sigma(X_\sm)d \tilde Z_s$,
where $\tilde\sigma=-\sigma$ and $\tilde Z_t=-Z_t$ is a 
$(\alpha,a_+,a_-)$-stable process.

\begin{thm}\label{mr1}
Consider a stable process $(Z_t)_{t\geq 0}$ of order
$\alpha\in (1,2)$ with parameters $0\leq a_- \leq a_+$. 
Set $\beta=\beta(\alpha,a_-/a_+)$ as in Lemma \ref{acos0}.
Assume that $\sigma,b$ have at most linear growth and that for some
constants $\kappa_0,\kappa_1 \in [0,\infty)$,

$\bullet$ $\sigma$ is H\"older-continuous with index $(\alpha-\beta)/\alpha$
(which lies in $[1-1/\alpha,1/\alpha]$),

$\bullet$ for all $x,y\in \rr$,
$\sg(x-y)(a_+-a_-)(\sigma(y)-\sigma(x))\leq \kappa_1 |x-y|$,

$\bullet$ for all $x,y\in \rr$,
$\sg(x-y)(b(x)-b(y))\leq \kappa_0 |x-y|$.

Consider two solutions $(X_t)_{t\geq 0}$
and $(\tX_t)_{t\geq 0}$ to (\ref{sde}) started at $x$ and $\tx$.

(i) For any $t\geq 0$, there holds
\begin{align*}
\E\left[ |X_t-\tX_t|^{\beta}\right]\leq |x-\tx|^{\beta} 
e^{C t},
\end{align*}
where $C$ depends only on $\kappa_0,\kappa_1,\alpha,a_-,a_+$.
Thus pathwise uniqueness holds for (\ref{sde}).

(ii) If furthermore $b$ is constant and $(a_+-a_-)\sigma$ is non-decreasing,
then $\forall \; t\geq 0$, 
$$
\E\left[|X_t-\tX_t|^{\beta}\right]=|x-\tx|^{\beta}.
$$
\end{thm}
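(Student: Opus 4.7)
The plan is to apply It\^o's formula to $\phi_\e(D_t)$, where $D_t:=X_t-\tX_t$ and $\phi_\e\in C^2(\rr)$ is a smooth monotone approximation of $x\mapsto|x|^\beta$ (for example $\phi_\e(x)=(x^2+\e)^{\beta/2}$), then take expectations, let $\e\to 0$, and close via Gronwall. From (\ref{sde}) and the $\alpha\in(1,2)$ compensator convention,
$$D_t=(x-\tx)+\intot(b(X_s)-b(\tX_s))\,ds+\intot\!\!\intrs(\sigma(X_\sm)-\sigma(\tX_\sm))z\,\tN(ds\,dz),$$
so It\^o yields a drift term, a local martingale (whose expectation vanishes thanks to the moment control of Proposition \ref{weak}(ii) combined with the H\"older bound on $\sigma$), and a jump compensator
$$\mathcal J_s^\e:=\intrs\bigl[\phi_\e(D_\sm+c_s z)-\phi_\e(D_\sm)-\phi_\e'(D_\sm) c_s z\bigr]\nu^\alpha_{a_-,a_+}(dz),\qquad c_s:=\sigma(X_\sm)-\sigma(\tX_\sm).$$
Letting $\e\to 0$ turns $\phi_\e'(D_s)$ into $\beta|D_s|^{\beta-1}\sg(D_s)$, so the drift contribution is bounded by $\beta\kappa_0|D_s|^\beta$ via the one-sided Lipschitz assumption on $b$, and $\mathcal J_s^\e\to\mathcal J_s$, the analogous integral with $\phi_\e$ replaced by $|\cdot|^\beta$.

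The heart of the argument is the analysis of $\mathcal J_s$. A change of variable $w=|c_s|z$, together with the homogeneity $|D+w|^\beta=|D|^\beta\,|1+w/D|^\beta$, shows that $\mathcal J_s=|c_s|^\alpha|D_s|^{\beta-\alpha}\,\mathcal K_\pm$, where $\mathcal K_+$ is a fixed constant (depending only on $\alpha,\beta,a_-,a_+$) arising in the regime $\sg(D_s)=\sg(c_s)$, and $\mathcal K_-$ is the analogous constant in the opposite-sign regime, obtained essentially by swapping $a_-$ and $a_+$. In the same-sign regime one writes $\mathcal K_+$ as a combination of the two explicit integrals $\int_0^\infty[(1+r)^\beta-1-\beta r]r^{-\alpha-1}\,dr$ and $\int_0^\infty[|1-r|^\beta-1+\beta r]r^{-\alpha-1}\,dr$, both computable via Beta-function identities; the condition $\mathcal K_+=0$ reduces to the trigonometric equation
$$\cos(\pi\beta)\,\bigl[c^2(1-a^2)+(1+ca)^2\bigr]=c^2(1-a^2)-(1+ca)^2,\qquad a=\cos(\pi\alpha),\ c=a_-/a_+,$$
whose solution in $[\alpha-1,1]$ is precisely the $\beta(\alpha,a_-/a_+)$ of Lemma \ref{acos0}. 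Carrying out this integral evaluation, and checking the correct branch of $\arccos$ so that $\beta\in[\alpha-1,1]$, is the main obstacle. Once it is in place, the same-sign contribution vanishes. In the opposite-sign regime, the second hypothesis enters: the assumption $\sg(x-y)(a_+-a_-)(\sigma(y)-\sigma(x))\leq\kappa_1|x-y|$ is exactly the bound $(a_+-a_-)|c_s|\leq\kappa_1|D_s|$ on $\{\sg(c_s)\neq\sg(D_s)\}$, so $|c_s|^\alpha|D_s|^{\beta-\alpha}\leq C|D_s|^\beta$ and hence $\mathcal J_s\leq C|D_s|^\beta$.

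Summing the two contributions gives $\E[|D_t|^\beta]\leq|x-\tx|^\beta+C\intot\E[|D_s|^\beta]\,ds$, and Gronwall proves part (i); pathwise uniqueness follows by taking $x=\tx$. For part (ii), $b$ constant eliminates the drift term. If $a_+>a_-$, then $(a_+-a_-)\sigma$ non-decreasing forces $\sigma$ itself to be non-decreasing, so $\sg(c_s)=\sg(D_s)$ whenever $c_s\neq 0$; we are permanently in the same-sign regime and $\mathcal J_s\equiv 0$. If $a_+=a_-$, the symmetry of $\nu^\alpha_{a,a}$ under $z\mapsto -z$ forces $\mathcal K_+=\mathcal K_-$, both equal to zero at $\beta(\alpha,1)=\alpha-1$ because $|x|^{\alpha-1}$ is harmonic for the symmetric $\alpha$-stable generator in dimension one. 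In either situation all the inequalities above are equalities, so $\E[|D_t|^\beta]=|x-\tx|^\beta$ for every $t\geq 0$.
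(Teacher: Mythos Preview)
Your proposal is correct and matches the paper's approach almost line for line: It\^o's formula via the regularization $\phi_\e(x)=(x^2+\e)^{\beta/2}$, the scaling identity $\mathcal J_s=|c_s|^\alpha|D_s|^{\beta-\alpha}\mathcal K_\pm$, the choice of $\beta$ from Lemma~\ref{acos0} forcing $\mathcal K_+=0$ (this is exactly Lemma~\ref{basic}(ii), proved via the Beta-function computation you outline), and Gronwall. One small omission in part~(i): your step from $(a_+-a_-)|c_s|\leq\kappa_1|D_s|$ to $|c_s|^\alpha|D_s|^{\beta-\alpha}\leq C|D_s|^\beta$ is vacuous when $a_+=a_-$; in that symmetric case the paper makes an explicit case split, and the reason the opposite-sign term is harmless is precisely the observation $\mathcal K_-=\mathcal K_+=0$ that you correctly record in part~(ii).
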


Observe that the condition on $b$ holds as soon as $b=b_1+b_2$,
with $b_1$ non-increasing and $b_2$ Lipschitz-continuous.
When $a_+=a_-$, we have $\beta=\alpha-1$ and thus we only assume that 
$\sigma$ is H\"older-continuous with index $1/\alpha$, as Komatsu \cite{k} 
or Bass \cite{b1}.
But when $a_-<a_+$, there is automatically a compensation in
the driving stable process, which introduces a sort of drift term.
Our assumption on $\sigma$ holds if $\sigma=\sigma_1+\sigma_2$,
with $\sigma_1$ Lipschitz-continuous and $\sigma_2$ H\"older-continuous
with index $(\alpha-\beta)/\alpha$ and non-decreasing.
Observe that $(\alpha-\beta)/\alpha<1/\alpha$, so that if $\sigma$
is non-decreasing, the assumption on $\sigma$ is weaker if $a_-<a_+$
than if $a_-=a_+$.
Finally, if $a_-=0$, then $\beta=1$, so that our assumption 
on $\sigma$ holds if $\sigma=\sigma_1+\sigma_2$,
with $\sigma_1$ Lipschitz-continuous and $\sigma_2$ H\"older-continuous
with index $1-1/\alpha$ and non-decreasing.

\vip

As compared to \cite{k,b1}, 
point (i) allows for a drift term,  allows us to treat the case $a_-\ne a_+$
and provides some stability with respect
to the initial datum.
Point (ii) is a remarkable property. It was already discovered
by Komatsu \cite{k} when $a_-=a_+$ (and thus $\beta=\alpha-1$), 
although not explicitly stated. A similar remarkable identity holds 
in the Brownian case (with $\alpha=2$ and $\beta=\alpha-1=1$), see Le Gall,
\cite[Theorem 1.3 and its proof]{lg}.

\vip

As a by-product, our proof allows us to check the following statement.
See \cite[Theorems 4 and 5]{fp} 
for similar considerations about the stochastic heat
equation.

\begin{prop}\label{inv}
Assume that $\alpha \in (1,2)$ and that $a_-=a_+>0$.
Suppose that $\sigma,b$ have at most linear growth, that 
$\sigma$ is H\"older-continuous with index $1/\alpha$ and that
$b$ is non-increasing and continuous.

(i) If $(b,\sigma)$ is injective, then (\ref{sde}) has at most one invariant
distribution.

(ii) If there is a strictly increasing function $\rho:\rr_+\mapsto \rr_+$
such that
$$
\forall \; x,y\in \rr,\quad 
\indiq_{\{x \ne y\}}
|x-y|^{\alpha-2}[|b(x)-b(y)|+|\sigma(x)-\sigma(y)|^\alpha]\geq \rho(|x-y|),
$$
then for any pair of solutions $(X_t)_{t\geq 0}$
and $(\tX_t)_{t\geq 0}$ to (\ref{sde}) started at $x$ and $\tx$
(driven by the same stable process $(Z_t)_{t\geq 0}$),
$\lim_{t\to\infty}|X_t-\tX_t|=0$ a.s.
\end{prop}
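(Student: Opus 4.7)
The plan is to extract from the proof of Theorem~\ref{mr1} a dissipation-of-energy identity and then run two soft arguments, one for each part. For two solutions $X,\tX$ to (\ref{sde}) driven by the same stable process $Z$ and starting from $x,\tx$, one expects an identity of the form
\begin{align*}
\E|X_t-\tX_t|^{\beta}+\E\int_0^t\Psi(X_s,\tX_s)\,ds=|x-\tx|^{\beta}
\end{align*}
for some $\beta\in(0,\alpha-1]$, in which $\Psi\geq 0$ controls both the drift dissipation (from the monotonicity of $b$) and the jump dissipation (from the symmetric L\'evy compensator applied to $|u|^\beta$); concretely,
\begin{align*}
\Psi(x,y)\geq c\bigl[|x-y|^{\beta-1}|b(x)-b(y)|+|x-y|^{\beta-\alpha}|\sigma(x)-\sigma(y)|^\alpha\bigr]\indiq_{x\ne y},
\end{align*}
the second piece coming from the scaling identity $\int[|u+Hz|^\beta-|u|^\beta-\beta\sg(u)|u|^{\beta-1}Hz]\nu(dz)=|H|^\alpha K(u)$ with $K(u)$ of definite sign and proportional to $|u|^{\beta-\alpha}$ (at $\beta=\alpha-1$ this kernel vanishes for $u\neq 0$, so one must either take $\beta<\alpha-1$ or argue by a limit from below). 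After the standard localization, $|X_t-\tX_t|^\beta$ is a nonnegative supermartingale.

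Granting this, for part (i) let $\mu_1,\mu_2$ be two invariant distributions; couple their initial laws on a common probability space and build $X,\tX$ with the same driving noise, so that $X_t\sim\mu_1$ and $\tX_t\sim\mu_2$ for every $t$. By Proposition~\ref{weak}(ii) both have finite $\beta$-moments, hence the identity yields $\int_0^\infty\E\Psi(X_s,\tX_s)\,ds<\infty$, so along some $t_n\to\infty$ one has $\E\Psi(X_{t_n},\tX_{t_n})\to 0$. The sequence $(X_{t_n},\tX_{t_n})$ is tight (its marginals are fixed), so along a subsequence it converges in law to some $(X_\infty,\tX_\infty)$ with marginals $\mu_1,\mu_2$. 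Lower semicontinuity of $\Psi$ together with Fatou forces $\Psi(X_\infty,\tX_\infty)=0$ a.s., whence $b(X_\infty)=b(\tX_\infty)$ and $\sigma(X_\infty)=\sigma(\tX_\infty)$; injectivity of $(b,\sigma)$ yields $X_\infty=\tX_\infty$ a.s., so $\mu_1=\mu_2$.

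For part (ii), the nonnegative supermartingale $|X_t-\tX_t|^\beta$ converges a.s.\ to some $L^\beta\in[0,\infty)$. Assume for contradiction $\Pr(L>0)>0$. On $\{L>0\}$, for $\omega$-dependent constants $0<r_0<R<\infty$ and all sufficiently large $s$, $|X_s-\tX_s|\in[r_0,R]$; on this compact set the prefactors $|x-y|^{\beta-1}$ and $|x-y|^{\beta-\alpha}$ are bounded below by positive constants, so
\begin{align*}
\Psi(X_s,\tX_s)\geq c_{r_0,R}\bigl[|b(X_s)-b(\tX_s)|+|\sigma(X_s)-\sigma(\tX_s)|^\alpha\bigr].
\end{align*}
Rewriting the hypothesis as $|b(x)-b(y)|+|\sigma(x)-\sigma(y)|^\alpha\geq |x-y|^{2-\alpha}\rho(|x-y|)$ and using $|X_s-\tX_s|\geq r_0$ with $\rho$ strictly increasing, this gives $\Psi(X_s,\tX_s)\geq c'_{r_0,R}\rho(r_0)>0$ for all large $s$ on $\{L>0\}$. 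Hence $\int_0^\infty\Psi(X_s,\tX_s)\,ds=\infty$ there, contradicting the fact that $\int_0^\infty\Psi<\infty$ a.s.\ (since the expectation is finite). Thus $L=0$ a.s., i.e.\ $|X_t-\tX_t|\to 0$ a.s.

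The main obstacle is the first step: producing a single dissipation density in which both $b$ and $\sigma$ appear with a definite sign. The symmetric $\alpha$-stable compensator of $|u|^{\alpha-1}$ vanishes away from zero (this is precisely the mechanism behind the exact equality of Theorem~\ref{mr1}(ii)), so a genuine $\sigma$-contribution is only visible after working with $\beta<\alpha-1$, where the kernel $K$ carries a definite negative sign proportional to $|u|^{\beta-\alpha}$; once this is in hand, (i) and (ii) reduce to the standard compactness/Borel--Cantelli arguments above.
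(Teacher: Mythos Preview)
Your endgame arguments for (i) and (ii) are sound and close in spirit to the paper's (your contradiction argument for (ii) is in fact slightly slicker than the paper's route via a conditional Doob $L^1$ inequality). The real issue is the one you yourself flag as the ``main obstacle'', and you have not resolved it. If you take $\beta<\alpha-1$ so that the symmetric compensator constant $\tI^{\alpha,\beta}_{a_+,a_+}$ is strictly negative, then the It\^o identity you want (Lemma~\ref{ito1} in the paper) requires $\sigma$ to be H\"older of index $(\alpha-\beta)/\alpha>1/\alpha$, strictly more than is assumed; with only $\sigma\in H(1/\alpha)$ the quantity $|\Delta|^{\beta-\alpha}|\delta|^\alpha$ is dominated merely by $C|\Delta|^{\beta+1-\alpha}$, whose exponent is negative, and this obstructs both the dominated-convergence step and the verification that $M_t$ is a genuine martingale. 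Your alternative of ``arguing by a limit from below'' does not help: for each $\beta_n\uparrow\alpha-1$ one still needs the excess regularity, and the coefficient $|\tI^{\alpha,\beta_n}_{a_+,a_+}|$ in front of the $\sigma$-term tends to zero, so nothing survives in the limit.

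The paper sidesteps this entirely by staying at $\beta=\alpha-1$, where the compensator's $\sigma$-contribution vanishes and Lemma~\ref{ito1} needs only $\sigma\in H(1/\alpha)$. The $\sigma$-dissipation is then extracted from a different source: since $U_t:=|x-\tx|^{\alpha-1}+M_t$ equals $|\Delta_t|^{\alpha-1}$ plus a nonnegative drift integral, $U_t$ is a \emph{nonnegative} martingale, hence a.s.\ convergent, and therefore its bracket
\[
\int_0^\infty\!\intrs\bigl[|\Delta_s+\delta_s z|^{\alpha-1}-|\Delta_s|^{\alpha-1}\bigr]^2\,\nu_{a_+,a_+}^\alpha(dz)\,ds
\]
is a.s.\ finite. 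A direct scaling computation shows this equals $c\int_0^\infty \indiq_{\{\Delta_s\ne 0\}}|\Delta_s|^{\alpha-2}|\delta_s|^\alpha\,ds$, giving precisely the pathwise $\sigma$-control you need, with no extra regularity on $\sigma$. Together with the $b$-dissipation (which you already obtain from the supermartingale inequality), this feeds directly into your endgame.
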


The basic example of application is the following: if 
$b(x)=-x$, then the conclusions of (i) and (ii) hold under
the sole assumption that $\sigma$ is H\"older-continuous with index
$1/\alpha$. In particular, no positivity of $\sigma$ is required at all.
We only treat the case where $a_-=a_+$, because the other possible results
are less interesting (although the proof is easily extended): 
some monotonicity conditions have to be imposed
on the {\it true drift coefficient}, which involves $b$ and $\sigma$.

\subsection{The case where $\alpha \in (0,1)$.}

Our goal is now to show that when $\alpha \in (0,1)$, 
(\ref{sde2}) is a {\it nice} representation
of (\ref{sde}), in the sense that pathwise uniqueness holds for a
larger class of functions $\sigma$, the Lipschitz condition being
replaced by a weaker condition. First, we state a general result
without monotonicity conditions on $\sigma$.

\begin{thm}\label{mr2}
Let $\alpha \in (0,1)$ and $a_-,a_+\in [0,\infty)$. 
Consider a Poisson measure $M$ on 
$[0,\infty)\times \rr_*\times \rr_*$ with intensity mesure
$ds\; \nu_{a_-,a_+}^\alpha(dz)\; du$.
Assume that $\sigma,b$ have at most linear growth and that
for some constant $\kappa_0\in [0,\infty)$, 

$\bullet$ $\gamma(x)=\sg(\sigma(x)).|\sigma(x)|^\alpha$ 
is H\"older-continuous with index $\alpha$,

$\bullet$ for all
$x,y\in \rr$,  $\sg(x-y)(b(x)-b(y))\leq\kappa_0|x-y|$.

Consider two solutions $(Y_t)_{t\geq 0}$
and $(\tY_t)_{t\geq 0}$ to (\ref{sde2}) started at $x$ and $\tx$.
Then for any $\beta \in (0,\alpha)$, any $t\geq 0$,
$$
\E\left[|Y_t-\tY_t|^\beta \right] \leq |x-\tx|^\beta e^{C t},
$$
where $C$ depends only on $\alpha,a_-,a_+,\beta,\kappa_0$ and on the
H\"older constant of $\gamma$. Thus pathwise uniqueness holds
for (\ref{sde2}).
\end{thm}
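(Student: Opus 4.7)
Let $D_t = Y_t-\tY_t$. Since $\alpha\in(0,1)$, both $(Y_t)$ and $(\tY_t)$ have locally finite variation, so no stochastic-integral compensation is needed: the sum of jumps of $D$ converges absolutely. The strategy is to run an It\^o-type expansion for a smoothed version $f_\e(x)=(x^2+\e)^{\beta/2}$ of $|x|^\beta$, close an integral inequality, and finish by Gronwall.

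The first step is the elementary sign identity
$$[\indiq_{\{0<u<g\}}-\indiq_{\{g<u<0\}}]-[\indiq_{\{0<u<\tilde g\}}-\indiq_{\{\tilde g<u<0\}}] = \sg(g-\tilde g)\,\indiq_{\{u\in (g\wedge \tilde g,\,g\vee\tilde g)\}},$$
proved by checking the four sign combinations of $(g,\tilde g)$. Taking $g=\gamma(Y_\sm)$, $\tilde g=\gamma(\tY_\sm)$, this says that at each atom $(s,z,u)$ of $M$ at most one of $Y,\tY$ actually jumps, and $D$ then jumps by $\eta_s z$ with $\eta_s=\sg(\gamma(Y_\sm)-\gamma(\tY_\sm))\in\{-1,+1\}$. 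Applying It\^o's formula to $f_\e(D_t)$, taking expectations, and integrating out $u$ over the strip of length $|\gamma(Y_\sm)-\gamma(\tY_\sm)|$ yields
\begin{align*}
\E[f_\e(D_t)] = f_\e(x-\tx) &+ \E\!\int_0^t\! f'_\e(D_s)(b(Y_s)-b(\tY_s))\,ds\\
&+ \E\!\int_0^t\!|\gamma(Y_s)-\gamma(\tY_s)|\,\Phi^{\eta_s}_\e(D_s)\,ds,
\end{align*}
where $\Phi^\pm_\e(x):=\int_{\rr_*}[f_\e(x\pm z)-f_\e(x)]\,\nu^\alpha_{a_-,a_+}(dz)$.

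The heart of the argument consists of two matched bounds. For the jump term, letting $\e\to 0$ one recognises $\Phi^\pm(x)=\int[|x\pm z|^\beta-|x|^\beta]\,\nu^\alpha_{a_-,a_+}(dz)$, and the change of variables $z=|x|w$ combined with $\beta\in(0,\alpha)$ (which grants integrability at both $0$ and $\infty$) yields $|\Phi^\pm(x)|\leq C|x|^{\beta-\alpha}$. Combined with the $\alpha$-H\"older hypothesis $|\gamma(Y_s)-\gamma(\tY_s)|\leq K_\gamma|D_s|^\alpha$, this produces the exponent-matching estimate $|\gamma(Y_s)-\gamma(\tY_s)|\cdot|\Phi^{\eta_s}(D_s)|\leq C|D_s|^\beta$. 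For the drift, $f'_\e(x)=\beta x(x^2+\e)^{\beta/2-1}$ together with the one-sided Lipschitz bound on $b$ gives $f'_\e(D_s)(b(Y_s)-b(\tY_s))\leq \beta\kappa_0|D_s|^\beta$ uniformly in $\e$. Passing $\e\to 0$ by dominated convergence (the domination by $|D_s|^\beta$ being integrable on $[0,T]$ by Proposition \ref{weak}) produces $\E|D_t|^\beta\leq|x-\tx|^\beta+C\int_0^t\E|D_s|^\beta\,ds$; Gronwall closes the estimate, and pathwise uniqueness follows by setting $x=\tx$ and invoking right-continuity.

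The main obstacle I expect is making the regularization rigorous: one needs uniform-in-$\e$ bounds of the form $|f_\e(x+z)-f_\e(x)|\leq C|z|^\beta$ to justify dominated convergence inside both the Poisson and the time integrals, and the scaling bound on $\Phi^\pm$ requires some care because its integrand is not globally signed---the estimate survives precisely because the regime $\beta<\alpha<1$ provides integrability at both endpoints.
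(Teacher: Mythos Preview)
Your proposal is correct and follows essentially the same route as the paper: regularise $|x|^\beta$ by $(x^2+\e)^{\beta/2}$, apply It\^o's formula, bound the jump contribution via the scaling estimate $|\Phi^\pm_\e(x)|\leq C|x|^{\beta-\alpha}$ (which the paper isolates as a separate lemma, proved \emph{uniformly} in $\e$ from the two-sided pointwise bound $|f_\e(x+z)-f_\e(x)|\leq C\min\{|z|^\beta,\,|x|^{\beta-1}|z|\}$), combine this with the $\alpha$-H\"older hypothesis on $\gamma$ to get a $|D_s|^\beta$ integrand, and close by Gronwall. The only refinement to your anticipated obstacle is that the bound $|f_\e(x+z)-f_\e(x)|\leq C|z|^\beta$ by itself does not give integrability of $\Phi^\pm_\e$ near $z=0$ when $\beta<\alpha$; the Lipschitz-type piece $|x|^{\beta-1}|z|$ (from $|f'_\e|\leq\beta|x|^{\beta-1}$) is what handles the small-$z$ regime.
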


Observe at once that if $\sigma$ is bounded below by a positive
constant and H\"older-continuous
with index $\alpha$, then $\gamma$ is also
H\"older-continuous with index $\alpha$. But if 
$\sigma$ vanishes, it has to be Lipschitz-continuous around its zeros.
This is not only a technical condition as shown by 
Komatsu \cite{k} or Bass \cite[Remark 3.4]{b1}: if $\alpha \in (0,1)$, 
$x=0$, $b=0$, $a_-=a_+=1$ and $\sigma(x)=|x|^\beta$ (whence
$\gamma(x)=|x|^{\beta\alpha}$)
for some $\beta<1$, then uniqueness in law fails for (\ref{sde}), whence
it also fails for (\ref{sde2}).

\vip

It might be surprising at first glance that in some cases,
pathwise uniqueness holds for (\ref{sde2}) but not for (\ref{sde}).
This comes from the fact that, e.g. when starting from two
initial positions $x$ and $\tx$,  (\ref{sde2})
builds two different stable processes (coupled in a suitable way)
to drive $(Y_t)_{t\geq 0}$ and $(\tY_t)_{t\geq 0}$,
while in (\ref{sde}), the same stable process 
drives $(X_t)_{t\geq 0}$ and $(\tX_t)_{t\geq 0}$. 
We see that the choice made
in (\ref{sde2}) is more efficient.

\vip

Let us now try to take advantage
of some monotonicity considerations when $a_-\ne a_+$.
This seems possible only if $\alpha \in (1/2,1)$
and if $a_-/a_+$ is small enough.

\begin{lem}\label{acos}
For $\alpha \in (1/2,1)$, set $a=\cos(\pi \alpha) \in (-1,0)$. Then for 
$c \in [0,-a)$,
$$
\beta(\alpha,c):= \frac{1}{\pi}\arccos 
\left( \frac{1-a^2-(c+a)^2}{1-a^2+(c+a)^2}
\right) \in (0,2\alpha-1].
$$
There holds $\beta(\alpha,0)=2\alpha-1$ and, 
for any $c\in (0,1)$, $\lim_{\alpha \to 1-} \beta(\alpha,c)=1$.
\end{lem}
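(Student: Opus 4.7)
The statement is a purely analytic (in fact, trigonometric) verification about the function
$$
f(c) := \frac{1-a^2-(c+a)^2}{1-a^2+(c+a)^2},\qquad a=\cos(\pi\alpha)\in(-1,0),
$$
so the plan is to extract the monotonicity of $f$ on $[0,-a)$, then compute the endpoint $f(0)$ in closed form, and finally take a limit. No probabilistic input is needed.

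First I would check that $\arccos(f(c))$ is well defined on $[0,-a)$: since $1-a^2>0$, one has $|1-a^2-(c+a)^2|\le 1-a^2+(c+a)^2$, with equality in the upper bound only when $c=-a$. Thus $f(c)\in[-1,1)$ for $c\in[0,-a)$. Next, as $c$ ranges over $[0,-a)$, the quantity $c+a$ increases from $a<0$ to $0$, so $(c+a)^2$ decreases strictly from $a^2$ to $0$. Writing $f$ as a monotone function of $t=(c+a)^2\in(0,a^2]$, namely $f=(1-a^2-t)/(1-a^2+t)$, one sees that $f$ is strictly decreasing in $t$, hence strictly increasing in $c$. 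Consequently $f(c)\in[f(0),1)=[1-2a^2,\,1)$, and applying the decreasing function $\arccos$ gives $\beta(\alpha,c)\in(0,\beta(\alpha,0)]$.

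The main step, and the only one that requires care, is the identity $\beta(\alpha,0)=2\alpha-1$. At $c=0$ one has $f(0)=1-2a^2=1-2\cos^2(\pi\alpha)=-\cos(2\pi\alpha)$ by the double-angle formula. Since $\alpha\in(1/2,1)$ we have $2\pi\alpha\in(\pi,2\pi)$, hence $2\pi-2\pi\alpha\in(0,\pi)$ and $\cos(2\pi\alpha)=\cos(2\pi-2\pi\alpha)$; therefore $\arccos(\cos(2\pi\alpha))=2\pi(1-\alpha)$. Using $\arccos(-x)=\pi-\arccos(x)$ yields
$$
\arccos(f(0))=\arccos(-\cos(2\pi\alpha))=\pi-2\pi(1-\alpha)=(2\alpha-1)\pi,
$$
and dividing by $\pi$ gives $\beta(\alpha,0)=2\alpha-1$ as claimed. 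This tracking of the correct branch of $\arccos$ on the interval $2\pi\alpha\in(\pi,2\pi)$ is the one subtle point; everything else is direct.

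Finally, for the limit as $\alpha\to 1^-$ with $c\in(0,1)$ fixed: one has $a\to -1$ so $-a\to 1$, and for $\alpha$ close enough to $1$ we have $c\in[0,-a)$, so $\beta(\alpha,c)$ is defined. In $f(c)$, the numerator $1-a^2-(c+a)^2\to 0-(c-1)^2=-(c-1)^2<0$, while the denominator $1-a^2+(c+a)^2\to(c-1)^2>0$, so $f(c)\to -1$ and $\beta(\alpha,c)=\tfrac{1}{\pi}\arccos(f(c))\to \tfrac{1}{\pi}\arccos(-1)=1$. This completes the plan.
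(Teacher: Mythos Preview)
Your proof is correct and follows essentially the same approach as the paper: both arguments reduce the range claim to showing $f(c)\in[1-2a^2,1)$ (you via monotonicity in $t=(c+a)^2$, the paper by the equivalent direct inequality $a^2\ge(c+a)^2$), compute $\beta(\alpha,0)$ from $f(0)=1-2a^2$, and handle the limit $\alpha\to1^-$ identically. Your treatment of the $\arccos$ branch at $c=0$ is more explicit than the paper's, but the underlying computation is the same.
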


We only consider the case $a_-<a_+$ without loss of generality.

\begin{thm}\label{mrsuper}
Assume that $\alpha \in (1/2,1)$,
that $a_-/a_+ < |\cos(\pi\alpha)|$ and set 
$\beta:=\beta(\alpha,a_-/a_+)$ as in Lemma \ref{acos}.
Consider a  Poisson measure $M$ on 
$[0,\infty)\times \rr_*\times \rr_*$ with intensity measure
$ds\; \nu_{a_-,a_+}^\alpha(dz)\; du$.
Assume that $\sigma,b$ have at most linear growth and that
for some constants $\kappa_0,\kappa_1 \in [0,\infty)$,

$\bullet$ $\gamma(x)=\sg(\sigma(x)).|\sigma(x)|^\alpha$ 
is H\"older-continuous with index $\alpha-\beta$,

$\bullet$ for all
$x,y\in \rr$,  $\sg(x-y)(\gamma(x)-\gamma(y))\leq\kappa_1|x-y|^\alpha$,

$\bullet$ for all
$x,y\in \rr$,  $\sg(x-y)(b(x)-b(y))\leq\kappa_0|x-y|$.

Consider two solutions $(Y_t)_{t\geq 0}$
and $(\tY_t)_{t\geq 0}$ to (\ref{sde2}) started at $x$ and $\tx$.

(i) Then for any  $t\geq 0$,
$$
\E\left[|Y_t-\tY_t|^{\beta} \right] 
\leq |x-\tx|^{\beta} e^{C t},
$$
where $C$ depends only on $a_-,a_+,\alpha,\kappa_0,\kappa_1$. 
Thus pathwise uniqueness holds for (\ref{sde2}).

(ii) If furthermore $b$ is constant and
$\gamma$ is non-increasing,
then $\forall \; t\geq 0$,
$$
\E\left[|Y_t-\tY_t|^{\beta}\right]
= |x-\tx|^{\beta}.
$$
\end{thm}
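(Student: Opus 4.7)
The strategy is to apply It\^o's formula to a smooth even approximation $\phi_\e$ of $u \mapsto |u|^\beta$ (say $\phi_\e(u) = (u^2 + \e)^{\beta/2} - \e^{\beta/2}$), extract an integral identity that forces the ``unfavorable'' sign regime to contribute exactly zero precisely when $\beta = \beta(\alpha, a_-/a_+)$, and conclude by Gronwall. Setting $U_t = Y_t - \tY_t$ and $\eta_s = \sg(\gamma(Y_\sm) - \gamma(\tY_\sm))$, direct inspection of (\ref{sde2}) shows that the jump of $U$ at a point $(s,z,u)$ of $M$ equals $\eta_s z$ whenever $u$ lies in the open interval with endpoints $\gamma(Y_\sm)$ and $\gamma(\tY_\sm)$, and is zero otherwise. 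Since $\alpha < 1$ the L\'evy measure integrates $|z|$ near $0$, so applying It\^o's formula to $\phi_\e(U_t)$, taking expectations and integrating out the indicator in $u$ yields
\begin{align*}
\frac{d}{dt}\E[\phi_\e(U_t)] = \E[\phi_\e'(U_t)(b(Y_t) - b(\tY_t))] + \E[|\gamma(Y_t) - \gamma(\tY_t)| \, \Psi_\e^{\eta_t}(U_t)],
\end{align*}
where $\Psi_\e^+(u) = \intrs [\phi_\e(u+z) - \phi_\e(u)] \nu_{a_-,a_+}^\alpha(dz)$ and $\Psi_\e^-(u)$ is obtained by swapping $a_-$ and $a_+$.

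The heart of the argument is an integral identity. After the scaling $z = |u|w$ and passing $\e \downarrow 0$,
$$\lim_{\e\to 0}\Psi_\e^\eta(u) = |u|^{\beta-\alpha}\begin{cases} a_+ A + a_- B & \text{if } \sg(u) = \eta, \\ a_- A + a_+ B & \text{if } \sg(u) \ne \eta, \end{cases}$$
with $A = \intoi [(1+w)^\beta - 1] w^{-\alpha-1} dw$ and $B = \intoi [|1-w|^\beta - 1] w^{-\alpha-1} dw$, both finite since $\beta < \alpha < 1$. Integration by parts gives $A = (\beta/\alpha)\Gamma(1-\alpha)\Gamma(\alpha-\beta)/\Gamma(1-\beta) > 0$ and a similar closed form for $B$. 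The equation $a_-A(\beta) + a_+B(\beta) = 0$, i.e.\ $B/A = -a_-/a_+$, reduces, via $\Gamma(x)\Gamma(1-x) = \pi/\sin(\pi x)$ and standard trigonometric manipulations, to precisely the arccos expression defining $\beta(\alpha, a_-/a_+)$ in Lemma \ref{acos}. For instance, at $a_- = 0$ the equation becomes $B(\beta) = 0$, which gives $\sin(\pi(\alpha - \beta)) = \sin(\pi\alpha)$ and hence $\beta = 2\alpha - 1$, matching $\beta(\alpha, 0)$. Thus at the chosen $\beta$ the unfavorable coefficient vanishes identically.

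Given this identity, both (i) and (ii) follow. In the favorable regime $\sg(U_t) = \eta_t$, the monotonicity of $\gamma$ yields $|\gamma(Y_t) - \gamma(\tY_t)| \leq \kappa_1 |U_t|^\alpha$ and the jump contribution is bounded by $\kappa_1(a_+A + a_-B)\E[|U_t|^\beta]$; in the unfavorable regime it is exactly zero. The drift is controlled by $\kappa_0\beta\E[|U_t|^\beta]$ via the monotonicity of $b$ together with $\phi_\e'(u)\sg(u) \to \beta|u|^{\beta-1}$. The H\"older hypothesis on $\gamma$ with index $\alpha - \beta$ yields $|\gamma(Y_t) - \gamma(\tY_t)| \cdot |U_t|^{\beta-\alpha} \leq K$ uniformly, which legitimizes passing $\e \downarrow 0$ inside the expectation. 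Gronwall then gives (i). For (ii), $b$ constant kills the drift, and $\gamma$ non-increasing forces $\eta_t = -\sg(U_t)$ whenever $U_t \ne 0$, placing the system permanently in the unfavorable regime where the jump term is identically zero; hence $t \mapsto \E[|U_t|^\beta]$ is constant.

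The main obstacle is the verification of the integral identity $a_-A(\beta) + a_+B(\beta) = 0$ at $\beta = \beta(\alpha, a_-/a_+)$: the Gamma-function evaluation of $B$ is somewhat tedious, and the ensuing reduction to the arccos form of Lemma \ref{acos} needs careful trigonometric manipulation (the condition $a_-/a_+ < |\cos(\pi\alpha)|$ is precisely what ensures the root $\beta$ lies in $(0, 2\alpha - 1]$). Once this identity is in place, the remainder of the proof is a standard localization and dominated-convergence argument combined with the two-case sign analysis above.
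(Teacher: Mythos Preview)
Your proposal is correct and follows essentially the same route as the paper: the paper also applies It\^o's formula to $\phi_\eta(u)=(u^2+\eta^2)^{\beta/2}$, integrates out the $u$-variable to obtain the factor $|\gamma(Y_s)-\gamma(\tY_s)|$ times the integral you call $\Psi^{\eta_s}_\e$, passes to the limit via scaling to obtain $|U_s|^{\beta-\alpha}$ times either $I^{\alpha,\beta}_{a_+,a_-}$ or $I^{\alpha,\beta}_{a_-,a_+}$ (your $a_+A+a_-B$ and $a_-A+a_+B$), and then invokes the Gamma/Euler-reflection computation showing $I^{\alpha,\beta}_{a_-,a_+}=0$ precisely at $\beta=\beta(\alpha,a_-/a_+)$ (this is the paper's Lemma~\ref{basic}(i)). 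The only organizational difference is that the paper packages the approximation bounds and the It\^o step into separate lemmas (Lemmas~\ref{hyperchiant2} and~\ref{ito2}) and appeals to the a~priori moment estimate of Proposition~\ref{weak}(ii) rather than localization to justify the limit $\e\downarrow 0$; your sketch collapses these into the main argument but the content is the same.
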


This last property is of course remarkable. 
If $a_-=0$, the above result holds when $\gamma=\gamma_1+\gamma_2$,
with $\gamma_1$ H\"older-continuous 
with index $\alpha$ and $\gamma_2$ non-increasing
and H\"older-continuous with index $1-\alpha$, which is very small when
$\alpha$ is close to $1$. More generally, 
when $a_-<a_+$ and if $\alpha$ is very close to $1$, one has
to assume only very few regularity on $\gamma$, provided it is
non-increasing.

\subsection{Comments}\label{comments}

First observe that when $a_-<a_+$, the favorable monotonicity of $\sigma$
is not the same if $\alpha \in (0,1)$ and if $\alpha \in (1,2)$.
This is due to the fact that when $\alpha \in (1,2)$, the main problem
is due to the compensation (which appears negatively in the equation).

\vip

\begin{figure}
\includegraphics[width=11cm]{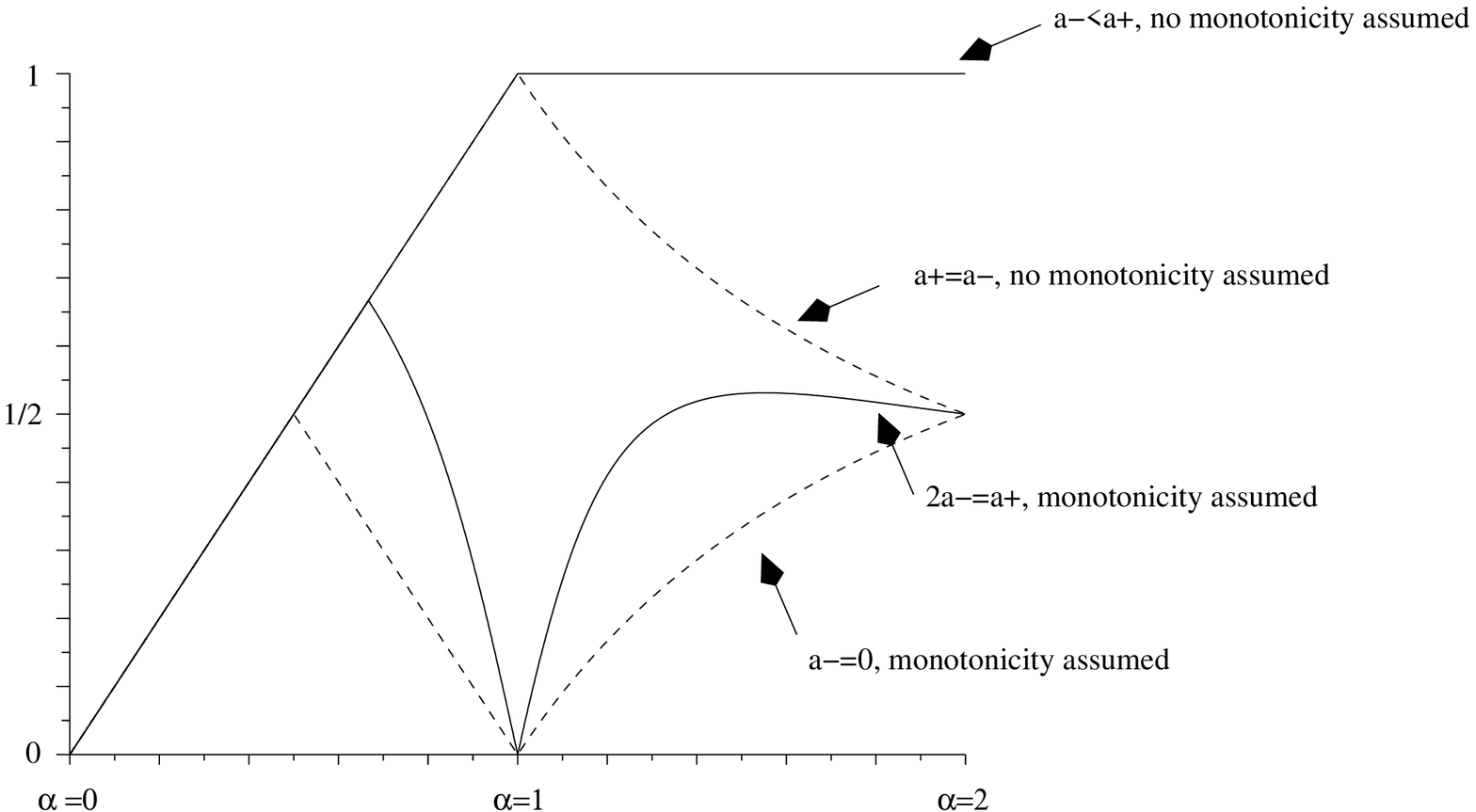}
\vip
\parbox{12cm}{\footnotesize{Figure 1. 
Index of H\"older regularity of $\gamma$ (if $\alpha \in (0,1)$)
or $\sigma$ (if $\alpha \in (1,2)$)
required for pathwise uniqueness as a function of $\alpha$.
The four curves coincide on $[0,1/2]$.}
}
\end{figure}

Let us summarize roughly our results. Denote by $H(\delta)$ the set
of H\"older-continuous functions with index $\delta$ and by $ H^{\downarrow}
(\delta)$ (resp. $ H^{\uparrow}(\delta)$) its subset of non-increasing
(resp. non-decreasing) functions. Recall that when $\sigma$ is bounded
below by a positive constant, the regularity of $\gamma(x)=\sg(\sigma(x)).
|\sigma(x)|^\alpha$ is the same as that of $\sigma$.
We have pathwise uniqueness for (\ref{sde2}) (if $\alpha \in (0,1)$)
and (\ref{sde}) (if $\alpha \in (1,2)$) if $b=b_1+b_2$ has at most linear
growth, with
$b_1 \in H(1)$ and $b_2$ non-increasing and if $\sigma=\sigma_1+\sigma_2$
(or $\gamma=\gamma_1+\gamma_2$) satisfies (we set $\beta(\alpha,c)=0$
if $\alpha \in (0,1/2]$ or if $c\geq -\cos(\pi\alpha)$):

\vip
\begin{center}
\begin{tabular}{|c||c|c|}
\hline
& $\alpha \in (0,1)$ & $\alpha \in (1,2)$   
\\ \hline \hline
$a_-=a_+$ & $\gamma \in H(\alpha)$ 
& $\sigma \in H(1/\alpha)$ 
\\ \hline 
$a_-<a_+$ & $ \begin{matrix} \gamma_1 \in H(\alpha),
\\
\gamma_2 \in H^{\downarrow}(\alpha-\beta(\alpha,a_-/a_+))\end{matrix}$
 & $ \begin{matrix} \sigma_1 \in H(1),\\
\sigma_2 \in H^{\uparrow}(1-\beta(\alpha,a_-/a_+)/\alpha)\end{matrix}$
\\ \hline 
$a_-=0$ & $ \begin{matrix} \gamma_1 \in H(\alpha),
\\
\gamma_2 \in H^{\downarrow}(1-\alpha)\end{matrix}$
 & $ \begin{matrix} \sigma_1 \in H(1),\\
\sigma_2 \in H^{\uparrow}(1-1/\alpha)\end{matrix}$
\\ \hline 
\end{tabular}
\end{center}
\vip
Thus the situation is quite intricate.
When $a_-=a_+$ and $\sigma$ is bounded from below, we have to assume that 
$\sigma\in H(\min\{\alpha,1/\alpha\})$.
It seems quite strange that the
required regularity of $\sigma$ is low when $\alpha$ is small, maximal
when $\alpha=1$ and small again when $\alpha$ is near $2$. 
A more tricky representation of (\ref{sde}) 
might allow one to obtain some better results.

\vip

When $a_-=0$ and $\sigma$ is bounded from below and monotonic, 
we have to assume that $\sigma \in H(\alpha)$ (if $\alpha \in (0,1/2]$),
$\sigma \in H^{\downarrow}(1-\alpha)$ (if $\alpha \in (1/2,1)$)
and $\sigma \in H^{\uparrow}(1-1/\alpha)$ (if $\alpha \in (1,2)$). 
Thus few regularity is needed
when $\alpha$ is near $0$ or $1$ and higher regularity is needed
when $\alpha$ is near $1/2$ and $2$.

\vip

Theorem \ref{mr1} is not so good when $a_-< a_+$, because we have
to assume the Lipschitz-continuity of the decreasing
part of $\sigma$. On the contrary, Theorem \ref{mr2} works quite well
for any value of $a_-,a_+$.

\vip

Theorems \ref{mr1} and \ref{mrsuper} really rely on specific
properties of stable processes.
Theorem \ref{mr2}, of which the proof is much simpler, 
may be easily extended to other 
jumping S.D.E.s with finite variations. For example, some of the 
Lipschitz assumptions
of \cite{f} can be consequently weakened.

\subsection{Plan of the paper} In the next section, we prove Lemmas \ref{acos0}
and \ref{acos} and show that some integrals vanish. These
integrals are those that appear when we use the It\^o
formula to compute $|X_t-\tX_t|^\beta$ for two solutions to
(\ref{sde}) or (\ref{sde2}).
Section \ref{hc} shows how to approximate these integrals.
We prove Theorem \ref{mr1} and Proposition \ref{inv} in Section \ref{p1}.
Section \ref{p2} is devoted to the proofs of Theorems \ref{mr2} and
\ref{mrsuper}.
We finally check Proposition \ref{weak} and Lemma \ref{eq} in Section \ref{ch}.

\section{Computation of some integrals}\label{integ}
This technical section contains the main tools of the paper.
We introduce
\begin{align}
&\hbox{for } \alpha\in (0,1) \hbox{ and } \beta\in (0,\alpha), \quad
I_{a_-,a_+}^{\alpha,\beta} = \intrs \left[|1-x|^\beta-1 \right] 
\nu_{a_-,a_+}^\alpha(dz)
, \label{dfI}\\
&\hbox{for } \alpha\in (1,2) \hbox{ and } \beta\in (0,\alpha), \quad
\tI_{a_-,a_+}^{\alpha,\beta} = \intrs \left[|1+x|^\beta-1 -\beta x\right] 
\nu_{a_-,a_+}^\alpha(dz). \label{dftI}
\end{align}
Observe that all the above integrals converge absolutely.
The aim of this section is to prove Lemmas \ref{acos0} and \ref{acos}, as
well as the following identities.

\begin{lem}\label{basic}
(i) Let $\alpha\in (1/2,1)$ and $0\leq a_-\leq a_+$ such that
$a_-/a_+ < -\cos (\pi\alpha)$. Set $\beta=\beta(\alpha,a_-/a_+)
\in(0,2\alpha-1]$ 
as in Lemma
\ref{acos}. There holds $I^{\alpha,\beta}_{a_-,a_+}=0$.

(ii) Let $\alpha \in (1,2)$ and  $0\leq a_-\leq a_+$.
Set $\beta=\beta(\alpha,a_-/a_+) \in [\alpha-1,1]$ as in Lemma
\ref{acos0}. There holds $\tI^{\alpha,\beta}_{a_-,a_+}=0$.
\end{lem}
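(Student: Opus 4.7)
The plan is to reduce both parts to computing two one-dimensional integrals in closed form. Splitting $\tI^{\alpha,\beta}_{a_-,a_+}$ at $z=0$ and sending $z\mapsto -z$ on the negative half-line gives $\tI^{\alpha,\beta}_{a_-,a_+} = a_+ F(\beta) + a_- G(\beta)$ with
$$F(\beta) := \int_0^\infty [(1+w)^\beta - 1 - \beta w] w^{-\alpha-1}\,dw, \qquad G(\beta) := \int_0^\infty [|1-w|^\beta - 1 + \beta w] w^{-\alpha-1}\,dw.$$
An analogous splitting yields $I^{\alpha,\beta}_{a_-,a_+} = a_-\tilde F(\beta) + a_+ \tilde G(\beta)$ for $\alpha\in(0,1)$, where $\tilde F,\tilde G$ are defined by the same integrals but with the $\pm\beta w$ terms dropped (convergence no longer requires them when $\alpha<1$); a short check shows that $\tilde F$ and $\tilde G$ admit the same closed forms as $F$ and $G$ below.

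For $F$, I would integrate by parts twice (first against $w^{-\alpha-1}\,dw$, then against $w^{-\alpha}\,dw$); the boundary terms vanish thanks to $\alpha\in(1,2)$ and $\beta\in(0,\alpha)$, and everything collapses to the Beta integral $\int_0^\infty (1+w)^{\beta-2} w^{1-\alpha}\,dw = B(2-\alpha,\alpha-\beta)$, giving $F(\beta) = \Gamma(-\alpha)\Gamma(\alpha-\beta)/\Gamma(-\beta)$. For $G(\beta)$, I would split the integration at $w=1$, apply $w\mapsto 1/w$ on $(1,\infty)$ to the $|1-w|^\beta$ piece, and handle the individually divergent tails $\int_0^1 w^{-\alpha-1}dw$, $\int_1^\infty w^{-\alpha}dw$ etc.\ by analytic continuation (treating them as values of $1/s$ and $\beta/(s+1)$ at $s=-\alpha$). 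The divergent contributions cancel exactly and the answer is
$$G(\beta) = \frac{\Gamma(-\alpha)\Gamma(\beta+1)}{\Gamma(\beta+1-\alpha)} + \frac{\Gamma(\alpha-\beta)\Gamma(\beta+1)}{\Gamma(\alpha+1)}.$$

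Applying the reflection formula $\Gamma(x)\Gamma(1-x)=\pi/\sin(\pi x)$ and the identity $\sin A-\sin B = 2\cos\tfrac{A+B}{2}\sin\tfrac{A-B}{2}$, a common prefactor emerges and one obtains the clean ratio
$$\frac{F(\beta)}{G(\beta)} = \frac{\cos(\pi\beta/2)}{\cos(\pi(\alpha-\beta/2))}.$$
Setting $c:=a_-/a_+$, the vanishing conditions $\tI^{\alpha,\beta}_{a_-,a_+}=0$ and $I^{\alpha,\beta}_{a_-,a_+}=0$ become linear trigonometric equations in $\cos(\pi\beta/2)$ and $\sin(\pi\beta/2)$. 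Writing $\theta=\pi\beta/2$, squaring, and using $\cos(2\theta)=2\cos^2\theta-1$ together with $\sin^2(\pi\alpha)=1-a^2$ (with $a=\cos(\pi\alpha)$) reduces them to
$$\cos(\pi\beta) = \frac{c^2(1-a^2)-(1+ca)^2}{c^2(1-a^2)+(1+ca)^2} \quad\text{(part (ii))},\qquad \cos(\pi\beta) = \frac{1-a^2-(c+a)^2}{1-a^2+(c+a)^2}\quad\text{(part (i))},$$
which are exactly the formulas defining $\beta(\alpha,c)$ in Lemmas~\ref{acos0} and~\ref{acos}. A direct derivative computation shows each right-hand side is strictly monotone in $c$ on the relevant interval, so the trigonometric equation selects a unique $\beta$ in the stated range $[\alpha-1,1]$ (resp.\ $(0,2\alpha-1]$); this monotonicity also establishes the bounds asserted in the two preliminary lemmas.

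The main obstacle is the evaluation of $G(\beta)$, since the natural splittings produce divergent Beta-type integrals whose values only make sense after Hadamard-type analytic continuation (or, equivalently, after a careful recombination of tails that exhibits the exact cancellations). Once the closed forms of $F$ and $G$ are secured, the remainder is a sequence of Gamma-function and trigonometric manipulations.
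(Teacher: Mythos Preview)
Your approach is essentially the same as the paper's: split the integral according to the sign of $z$, evaluate the resulting one-sided integrals in closed form via Beta/Gamma identities, apply the reflection formula, and check that the resulting trigonometric condition coincides with the defining equation for $\beta(\alpha,c)$. The paper uses the same decomposition --- in its notation $\tI^{\alpha,\beta}_{a_-,a_+}=a_+B_1+a_-(B_2+B_3)$ with $B_1=F$ and $B_2+B_3=G$, and analogously $I^{\alpha,\beta}_{a_-,a_+}=a_-A_1+a_+(A_2+A_3)$ for part (i).

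The one genuine methodological difference is your treatment of $G$. Where you regularize the individually divergent tails by analytic continuation in the exponent, the paper keeps every intermediate integral absolutely convergent: it splits at $w=1$, integrates by parts on each piece (on $(0,1)$ after the further decomposition $(1-w)^{\beta-1}-1=[(1-w)^{\beta-1}-(1-w)^\beta]+[(1-w)^\beta-1]$), and reduces everything to standard Beta integrals. This is more elementary and sidesteps the justification you yourself flag as ``the main obstacle''; on the other hand it is longer. Conversely, your packaging through the ratio
\[
\frac{F(\beta)}{G(\beta)}=\frac{\cos(\pi\beta/2)}{\cos\!\big(\pi(\alpha-\beta/2)\big)}
\]
and the half-angle identity is tidier than the paper's direct algebraic verification that the sine combination vanishes; both routes land on the same formulas for $\cos(\pi\beta)$. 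One small caveat: squaring the equation $\cos(\pi\beta/2)=-c\cos(\pi(\alpha-\beta/2))$ can introduce a spurious sign, so you should record the sign check (equivalently, work with $\tan(\pi\beta/2)$ directly) to confirm that the $\beta$ selected lies in the stated range.
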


\begin{proof}
We start with point (i). Observe that $\beta\leq 2\alpha-1<\alpha$, so that
the integral is convergent.
We write $I_{a_-,a_+}^{\alpha,\beta}=a_- A_1+a_+A_2+a_+A_3$, 
where \begin{align*}
A_1=&\int_0^\infty [(1+x)^{\beta}-1]x^{-\alpha-1}dx,\\
A_2=&\int_0^1 [(1-x)^{\beta}-1] x^{-\alpha-1}dx,\\
A_3=&\int_1^\infty [(x-1)^{\beta}-1] x^{-\alpha-1}dx.
\end{align*}
Using an integration by parts and then putting $u=1/(1+x)$, one can check that
\begin{align*}
A_1=\frac{\beta}{\alpha}\int_0^\infty (1+x)^{\beta-1}x^{-\alpha}dx
=\frac{\beta}{\alpha}\int_0^1 u^{\alpha-\beta-1}(1-u)^{-\alpha}du
=\frac{\beta \Gamma(\alpha-\beta)\Gamma(1-\alpha)}{\alpha \Gamma(1-\beta)}
\end{align*}
where $\Gamma$ is the Euler function. Next, an integration by parts implies
that
$$
A_2=\frac 1 \alpha - \frac{\beta}{\alpha} \int_0^1 (1-x)^{\beta-1}x^{-\alpha} dx
= \frac 1 \alpha - \frac{\beta \Gamma(\beta)\Gamma(1-\alpha)}
{\alpha \Gamma(1-(\alpha-\beta))}.
$$
Finally, setting $x=1/u$, we get
$$
A_3=\int_1^\infty \!\! (x-1)^{\beta}x^{-\alpha-1}dx - \frac{1}{\alpha}
= \int_0^1 (1-u)^\beta u^{\alpha-\beta-1} du - \frac{1}{\alpha}
=  \frac{\Gamma(\beta+1)\Gamma(\alpha-\beta)}{\Gamma(\alpha+1)}
- \frac{1}{\alpha}.
$$
We thus find, recalling that $\Gamma(a+1)=a\Gamma(a)$,
$$
I^{\alpha,\beta}_{a_-,a_+} = \frac{\beta}{\alpha}\left[
a_- \frac{\Gamma(\alpha-\beta)\Gamma(1-\alpha)}{\Gamma(1-\beta)}
-a_+ \frac{\Gamma(\beta)\Gamma(1-\alpha)}{\Gamma(1-(\alpha-\beta))}
+a_+ \frac{\Gamma(\beta)\Gamma(\alpha-\beta)}{\Gamma(\alpha)}
\right].
$$
Using now Euler's reflection formula 
$\Gamma(x)\Gamma(1-x)=\pi/\sin(\pi x)$ for $x\in (0,1)$,
\begin{align*}
I^{\alpha,\beta}_{a_-,a_+} =& 
\frac{\beta\Gamma(\beta)\Gamma(\alpha-\beta)}{\alpha\Gamma(\alpha)}
\left[
a_- \frac{\sin (\pi\beta)}{\sin(\pi\alpha)}
-a_+ \frac{\sin (\pi(\alpha-\beta))}{\sin(\pi\alpha)}
+a_+
\right]\\
=&\frac{a_+\beta\Gamma(\beta)\Gamma(\alpha-\beta)}
{\alpha\Gamma(\alpha)\sin(\pi\alpha)}
\left[ c \sin (\pi\beta) - \sin (\pi(\alpha-\beta)) +\sin(\pi\alpha)\right]
\end{align*}
where we have set $c=a_-/a_+$. We have chosen $\beta=\beta(\alpha,c)$
in such a way that  
$c \sin (\pi\beta) - \sin (\pi(\alpha-\beta)) +\sin(\pi\alpha)=0$, whence
$I^{\alpha,\beta}_{a_-,a_+}=0$ as desired. Indeed, recall that 
$\cos(\pi\beta)=b$,
where $b=(1-a^2-(c+a)^2)/(1-a^2+(c+a)^2)$, with $a=\cos(\pi\alpha)$. 
Since $\beta,\alpha \in (0,1)$, we have
$\sin(\pi\alpha)=\sqrt{1-a^2}$ and 
$\sin(\pi\beta)=\sqrt{1-b^2}$, whence
\begin{align*}
&c \sin (\pi\beta) - \sin (\pi(\alpha-\beta)) +\sin(\pi\alpha)\\
=&c \sin (\pi\beta) - \sin (\pi\alpha)\cos(\pi\beta) + 
\sin (\pi\beta)\cos(\pi\alpha) +\sin(\pi\alpha)\\
=&(a+c)\sqrt{1-b^2} +(1-b) \sqrt{1-a^2}.
\end{align*}
Recall that $a+c<0<1-b$, since $c=a_-/a_+<-\cos(\pi\alpha)=-a$. 
We thus need to check that
$(a+c)^2(1-b^2)=(1-b)^2(1-a^2)$,
i.e. $(a+c)^2(1+b)=(1-b)(1-a^2)$. This is easily verified.

\vip

We now prove (ii). We write $\tI_{a_-,a_+}^{\alpha,\beta}=a_+B_1+a_-B_2+a_-B_3$, 
where
\begin{align*}
B_1=& \int_0^\infty [(1+x)^\beta-1-\beta x]x^{-\alpha-1}dx,\\
B_2=& \int_1^\infty [(x-1)^\beta-1+\beta x]x^{-\alpha-1}dx,\\
B_3=& \int_0^1 [(1-x)^\beta-1+\beta x]x^{-\alpha-1}dx.
\end{align*}
Using two integrations by parts and then putting $u=1/(1+x)$, 
one can prove that, if $\alpha-1\leq \beta < 1$,
\begin{align*}
B_1 =& \frac{\beta (\beta-1)}{\alpha(\alpha-1)}\int_0^\infty 
(1+x)^{\beta-2}x^{1-\alpha}dx \\
=&  \frac{\beta (\beta-1)}{\alpha(\alpha-1)}
\int_0^1 u^{\alpha-\beta-1}(1-u)^{1-\alpha}du\\
=& - \frac{\beta (1-\beta) \Gamma(2-\alpha)\Gamma(\alpha-\beta)}
{\alpha(\alpha-1)\Gamma(2-\beta)}. \nonumber
\end{align*}
Since now $\alpha\in (1,2)$ and $\beta \in (0,1)$,
\begin{align}
\Gamma(2-\alpha)=&\frac{\pi}{\Gamma(\alpha-1)
\sin(\pi(\alpha-1))}=\frac{\pi(\alpha-1)}{\Gamma(\alpha)\sin(\pi(\alpha-1))},
\label{eulerr} \\
\Gamma(2-\beta)=&(1-\beta)\Gamma(1-\beta)=
\frac{\pi(1-\beta)}{\Gamma(\beta)\sin(\pi\beta)}.\nonumber
\end{align}
Hence
\begin{align*}
B_1=- \frac{\beta \Gamma(\beta)\Gamma(\alpha-\beta) \sin(\pi\beta)}
{\alpha \Gamma(\alpha) \sin(\pi(\alpha-1))}.
\end{align*}
This formula remains valid if $\beta=1$, since then $B_1=0$ and 
$\sin(\pi\beta)=0$.
Next we use one integration by parts and we put $u=1/x$ to get
\begin{align*}
B_2 =& \frac{\beta-1}{\alpha} +\frac{\beta}{\alpha}\int_1^\infty 
[1+(x-1)^{\beta-1}]x^{-\alpha}dx \\
=&  \frac{\beta-1}{\alpha} 
+\frac{\beta}{\alpha(\alpha-1)} + \frac{\beta}{\alpha}\int_0^1
u^{\alpha-\beta-1}(1-u)^{\beta-1}du\\
=& -\frac{1}{\alpha}+\frac{\beta}{\alpha-1}+ 
\frac{\beta \Gamma(\beta)\Gamma(\alpha-\beta)}{\alpha\Gamma(\alpha)}.
\end{align*}
Finally, an integration by parts shows that 
$$
B_3=\frac{1-\beta}{\alpha} - \frac{\beta}{\alpha} \int_0^1 
[(1-x)^{\beta-1}-1] x^{-\alpha}dx= 
\frac{1-\beta}{\alpha} - \frac{\beta}{\alpha} [G_1+G_2],
$$
where
$$
G_1= \int_0^1 
[(1-x)^{\beta-1}-(1-x)^\beta] x^{-\alpha}dx= \int_0^1 
(1-x)^{\beta-1}x^{1-\alpha}dx=\frac{\Gamma(\beta)\Gamma(2-\alpha)}
{\Gamma(\beta+2-\alpha)}
$$
and,  using an integration by parts, 
\begin{align*}
G_2=&\int_0^1 
[(1-x)^\beta-1] x^{-\alpha}dx \\
=& 
\frac{1}{\alpha-1}
-\frac{\beta}{\alpha-1} \int_0^1 (1-x)^{\beta-1}x^{1-\alpha}dx\\
=&\frac{1}{\alpha-1}- \frac{\beta \Gamma(\beta)\Gamma(2-\alpha)}
{(\alpha-1)\Gamma(\beta+2-\alpha)}.
\end{align*} 
Thus
\begin{align*}
B_3=&\frac 1 \alpha - \frac \beta {\alpha-1} + 
\frac{\beta(\beta+1-\alpha)\Gamma(\beta)\Gamma(2-\alpha)}
{\alpha(\alpha-1)\Gamma(\beta+2-\alpha)}\\
=&\frac 1 \alpha - \frac \beta {\alpha-1} + 
\frac{\beta\Gamma(\beta)\Gamma(\alpha-\beta) \sin(\pi(\alpha-\beta))}
{\alpha \Gamma(\alpha)\sin(\pi(\alpha-1))}.
\end{align*}
We used (\ref{eulerr}) and that 
$$
\frac{\Gamma(\beta+2-\alpha)}{\beta+1-\alpha}=\Gamma(\beta+1-\alpha)
=\frac{\pi}{\Gamma(\alpha-\beta) \sin(\pi(\alpha-\beta))}.
$$
This last equality uses that $\beta-\alpha+1 \in (0,1)$, but
one easily checks that the expression of $B_3$ 
remains valid if $\beta=\alpha-1$, because then 
$1+\beta-\alpha=\sin(\pi(\alpha-\beta))=0$. We finally find that
\begin{align*}
\tI_{a_-,a_+}^{\alpha,\beta}=& 
\frac{\beta\Gamma(\beta)\Gamma(\alpha-\beta)}{\alpha\Gamma(\alpha)}
\left[- a_+ \frac{\sin(\pi\beta)}{\sin(\pi(\alpha-1))}  
+a_- +a_- \frac{\sin(\pi(\alpha-\beta))}{\sin(\pi(\alpha-1))}
\right].
\end{align*}
Set $c=a_-/a_+$ and recall that
$b:=\cos(\pi\beta)=(c^2(1-a^2)-(1+ca)^2)/(c^2(1-a^2)+(1+ca)^2)$,
for $a=\cos(\pi\alpha)\in (-1,1)$. It remains to check that
$\sin(\pi\beta)=c\sin(\pi(\alpha-1))
+ c\sin(\pi(\alpha-\beta))$, i.e. 
$\sin(\pi\beta)=-c\sin(\pi\alpha)+c\sin(\pi\alpha)\cos(\pi\beta)
-c \sin(\pi\beta)\cos (\pi\alpha)$. 
Observe that since $\alpha\in(1,2)$,
$\sin(\pi\alpha)=-\sqrt{1-a^2}$, while since $\beta \in (0,1]$,
$\sin(\pi\beta)=\sqrt{1-b^2}$. We need to verify that
$\sqrt{1-b^2}=c\sqrt{1-a^2}-cb\sqrt{1-a^2}-c a \sqrt{1-b^2}$, i.e. that
$(1+ac)\sqrt{1-b^2}=c(1-b)\sqrt{1-a^2}$, i.e. that
$(1+ac)^2(1+b)=c^2(1-a^2)(1-b)$. This is easily done.
\end{proof}

We now give the

\begin{preuve} {\it of Lemma \ref{acos0}.}
Recall that $\alpha\in (1,2)$, that $a=\cos(\pi\alpha)\in (-1,1)$, 
that $0\leq c \leq 1$
and that $\beta(\alpha,c)=\pi^{-1}\arccos b$, where
$b:=(c^2(1-a^2)-(1+ca)^2)/(c^2(1-a^2)+(1+ca)^2)$.

First, we have $\beta(\alpha,0)=\pi^{-1}\arccos (-1) =1$ and
$\beta(\alpha,1)=\pi^{-1}\arccos (-a) =\pi^{-1}\arccos (-\cos(\pi\alpha))=
\alpha-1$. 

To check that  $\beta(\alpha,c) \in  (\alpha-1,1)$ 
if $c\in (0,1)$, it suffices
to prove that $b \in (-1,-a)$ (because $-1=\cos(\pi)$ and 
$-a=\cos(\pi(\alpha-1))$).
First, $b>-1$ is obvious if $c>0$. Next, we 
have to check that $c^2(1-a^2)-(1+ca)^2<-a(c^2(1-a^2)+(1+ca)^2)$,
i.e. that $c^2(1+a)^2<(1+ca)^2$, which holds true because $c<1$ and $|a|< 1$.
\end{preuve}

We conclude this section with the

\begin{preuve} {\it of Lemma \ref{acos}.} Recall that $\alpha \in (1/2,1)$,
that $0\leq c<-a=-\cos(\pi\alpha)<1$ 
and that $\beta(\alpha,c)=\pi^{-1}\arccos b$,
where $b=(1-a^2-(c+a)^2)/(1-a^2+(c+a)^2)$.
First, $\beta(\alpha,0)=\pi^{-1}\arccos(1-2a^2)=
\pi^{-1}\arccos(1-2\cos^2(\pi\alpha))= 2\alpha-1$.

To prove that
$\beta(\alpha,c)\in (0,2\alpha-1]$, it suffices to check that
$b \in [1-2a^2,1)$. First, $b<1$ is obvious. Next, 
$b\geq1-2a^2$ because $(1-a^2)-(c+a)^2\geq(1-2a^2)[(1-a^2)+(c+a)^2]$,
since $2a^2(1-a^2)\geq2(c+a)^2(1-a^2)$. Indeed, $a^2>(c+a)^2$,
since $0\leq c<-a$.

Finally, for $c<1$ fixed, $\lim_{\alpha \to 1-} a= -1$, whence 
$\lim_{\alpha \to 1-} b= -1$ and thus
$\lim_{\alpha \to 1-} \beta(\alpha,c) = \pi^{-1}\arccos(-1)=1$.
\end{preuve}

\section{Approximation lemmas}\label{hc}

To prove our main results, we will apply It\^o's formula
to compute $|X_t-\tX_t|^\beta$, for $(X_t)_{t\geq 0}$ and $(\tX_t)_{t\geq 0}$
two solutions to (\ref{sde}) or (\ref{sde2}), with some suitable
value of $\beta \in (0,\alpha)$. This is not licit, since the function
$|x|^\beta$ is not of class $C^2$. The two lemmas below will allow us
to overcome this difficulty.

\begin{lem}\label{hyperchiant2}
Let $0<\beta<\alpha<1$ and $a_-,a_+ \in [0,\infty)$. 
For $\eta>0$, set $\phi_\eta(x)=(\eta^2+x^2)^{\beta/2}$. 
For $\Delta\in \rr_*$,
\begin{align*}
J_{a_-,a_+}^{\alpha,\beta,\eta}(\Delta)
:=&\intrs \left[\phi_\eta(\Delta-z)-\phi_\eta(\Delta) 
\right] \nu_{a_-,a_+}^\alpha(dz) \\
\to& |\Delta|^{\beta-\alpha}\left[\indiq_{\Delta>0} 
I_{a_-,a_+}^{\alpha,\beta}
+  \indiq_{\Delta<0} I_{a_+,a_-}^{\alpha,\beta}\right]
\end{align*}
as $\eta \to 0$, recall (\ref{dfI}). Furthermore,
for all $\eta>0$, all $\Delta\in \rr_*$,
$$
|J_{a_-,a_+}^{\alpha,\beta,\eta}(\Delta)|\leq 
K_{a_-,a_+}^{\alpha,\beta,\eta}(\Delta):=\intrs 
\left|\phi_\eta(\Delta-z)-\phi_\eta(\Delta) 
\right| \nu_{a_-,a_+}^\alpha(dz)\leq C|\Delta|^{\beta-\alpha},
$$
where $C$ depends 
only on $\alpha,a_-,a_+,\beta$.
\end{lem}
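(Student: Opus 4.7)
The plan is a scaling reduction followed by a single uniform dominated-convergence argument. The key observation is that $\phi_\eta(x)=(\eta^2+x^2)^{\beta/2}$ scales as $\phi_\eta(\Delta y)=|\Delta|^\beta\phi_{\tilde\eta}(y)$, where $\tilde\eta:=\eta/|\Delta|$. For $\Delta>0$, substituting $z=\Delta y$ transforms the Lévy measure as $\nu_{a_-,a_+}^\alpha(dz)=\Delta^{-\alpha}\nu_{a_-,a_+}^\alpha(dy)$, giving the identity
$$
J_{a_-,a_+}^{\alpha,\beta,\eta}(\Delta)=\Delta^{\beta-\alpha}J_{a_-,a_+}^{\alpha,\beta,\tilde\eta}(1),
$$
and the analogous identity for $K$. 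For $\Delta<0$, the substitution $z\mapsto -z$ swaps $a_-$ and $a_+$ and, since $\phi_\eta$ is even, yields $J_{a_-,a_+}^{\alpha,\beta,\eta}(\Delta)=J_{a_+,a_-}^{\alpha,\beta,\eta}(|\Delta|)$, which matches the indicator decomposition in the claimed limit. The whole lemma thus reduces to two assertions at $\Delta=1$: (a) $J_{a_-,a_+}^{\alpha,\beta,\tilde\eta}(1)\to I_{a_-,a_+}^{\alpha,\beta}$ as $\tilde\eta\to 0$, and (b) $K_{a_-,a_+}^{\alpha,\beta,\tilde\eta}(1)$ is bounded uniformly in $\tilde\eta>0$.

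Both (a) and (b) will follow from a single pointwise bound on $|\phi_{\tilde\eta}(1-y)-\phi_{\tilde\eta}(1)|$ that is integrable against $\nu_{a_-,a_+}^\alpha$ and uniform in $\tilde\eta>0$. I split at $|y|=1/2$. For $|y|\leq 1/2$, the evaluation point $1-y$ lies in $[1/2,3/2]$, away from any singular behavior of $\phi_{\tilde\eta}$, so $|\phi_{\tilde\eta}'(x)|=\beta|x|(\tilde\eta^2+x^2)^{(\beta-2)/2}\leq\beta x^{\beta-1}$ is bounded on that interval uniformly in $\tilde\eta$, and the mean value theorem yields $|\phi_{\tilde\eta}(1-y)-\phi_{\tilde\eta}(1)|\leq C|y|$, which is $\nu$-integrable near $0$ because $\alpha<1$. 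For $|y|>1/2$, I apply the sub-additivity of $s\mapsto s^{\beta/2}$ (valid since $\beta/2\leq 1$) with $s_1=\tilde\eta^2+(1-y)^2$ and $s_2=\tilde\eta^2+1$ to obtain
$$
|\phi_{\tilde\eta}(1-y)-\phi_{\tilde\eta}(1)|\leq|s_1-s_2|^{\beta/2}=|y(y-2)|^{\beta/2}\leq C(1+|y|^\beta),
$$
which is $\nu$-integrable at infinity because $\beta<\alpha$. Together these provide a $\tilde\eta$-independent integrable envelope, which immediately yields (b).

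For (a), I note that $\phi_{\tilde\eta}(1-y)-\phi_{\tilde\eta}(1)\to|1-y|^\beta-1$ pointwise on $\rr_*$ as $\tilde\eta\to 0$, so dominated convergence with the envelope above gives $J_{a_-,a_+}^{\alpha,\beta,\tilde\eta}(1)\to I_{a_-,a_+}^{\alpha,\beta}$. The inequality $|J|\leq K$ is the triangle inequality inside the integral. The main obstacle is obtaining estimates on $\phi_{\tilde\eta}$ that remain valid for large $\tilde\eta$ (where $\phi_{\tilde\eta}$ is itself large): using smoothness on the interval $[1/2,3/2]$, where $\phi_{\tilde\eta}$ has no singular behavior regardless of $\tilde\eta$, together with the $s^{\beta/2}$ sub-additivity at the tails, is what eliminates any $\tilde\eta$-dependence and makes the whole argument go through uniformly.
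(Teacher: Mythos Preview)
Your proof is correct and takes a genuinely different organizational route from the paper. The paper works directly with general $\Delta$: it first establishes the $\eta$-uniform envelope
\[
|\phi_\eta(\Delta-z)-\phi_\eta(\Delta)|\leq C\min\{|z|^\beta,\ |\Delta|^{\beta-1}|z|\},
\]
obtained from the global $\beta$-H\"older estimate $|\phi_\eta(x+y)-\phi_\eta(x)|\leq |y|^\beta$ together with $|\phi_\eta'(x)|\leq\beta|x|^{\beta-1}$, integrates this envelope against $\nu_{a_-,a_+}^\alpha$ to get $C|\Delta|^{\beta-\alpha}$ directly, and only \emph{afterwards} performs the substitutions $z\mapsto -z$ and $z\mapsto z/|\Delta|$ to identify the limit with $I_{a_\pm,a_\mp}^{\alpha,\beta}$. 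You invert the order: you exploit the exact scaling $\phi_\eta(\Delta y)=|\Delta|^\beta\phi_{\eta/|\Delta|}(y)$ and the homogeneity of $\nu$ at the outset to reduce everything to $\Delta=1$, and then build a single $\tilde\eta$-uniform envelope there, via the mean value theorem on $[1/2,3/2]$ for small $|y|$ and the sub-additivity of $s\mapsto s^{\beta/2}$ for large $|y|$. Your approach makes the appearance of the factor $|\Delta|^{\beta-\alpha}$ and of the swapped indices $a_\mp,a_\pm$ completely transparent from the start, at the modest cost of having to argue uniformity over \emph{all} $\tilde\eta>0$ (not just small $\tilde\eta$), which you correctly flag and handle. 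The paper's envelope, by contrast, is slightly more portable since it is stated for arbitrary $\Delta$ and $z$ and is reused verbatim in the companion Lemma for $\alpha\in(1,2)$.
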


\begin{proof}
We fix $\Delta\in \rr_*$ and we observe that for all $\eta>0$,
\begin{align}\label{ccc}
|\phi_\eta(\Delta-z)-\phi_\eta(\Delta)| \leq C \min \left\{|z|^{\beta},
|\Delta|^{\beta-1} |z| \right\}.
\end{align}
This is easily deduced from the facts that 
$|\phi_\eta(x+y)-\phi_\eta(x)|\leq |y|^\beta$ and
$|\phi_\eta'(x)|\leq \beta |x|^{\beta-1}$. Separate the cases 
$|z|\leq |\Delta|/2$
and $|z|\geq |\Delta|/2$. But now
\begin{align*}
\intrs  \min\{|z|^{\beta},|\Delta|^{\beta-1}|z| \} |z|^{-\alpha-1}dz 
\leq& \int_{|z|\geq |\Delta|}|z|^{\beta-\alpha-1}dz+  \int_{|z|\leq |\Delta|} 
|\Delta|^{\beta-1} |z|^{-\alpha}dz \\
=& C |\Delta|^{\beta-\alpha}. 
\end{align*}
We immediately deduce that $|K_{a_-,a_+}^{\alpha,\beta,\eta}(\Delta)|\leq 
C|\Delta|^{\beta-\alpha}$. And by Lebesgue's dominated convergence theorem,
since $\lim_{\eta\to 0} \phi_\eta(x)=|x|^\beta$ for all $x\in \rr$,
\begin{align*}
\lim_{\eta \to 0} J_{a_-,a_+}^{\alpha,\beta,\eta}(\Delta)=&\intrs
\left[|\Delta-z|^{\beta} - |\Delta|^{\beta} \right] \nu_{a_-,a_+}(dz)\\
=&|\Delta|^{\beta} \intrs \left[|1-z/\Delta|^{\beta} - 1 \right] \nu_{a_-,a_+}(dz)
\\
=& \indiq_{\{\Delta>0\}}|\Delta|^{\beta} 
\intrs \left[|1-z/|\Delta||^{\beta} - 1 \right] \nu_{a_-,a_+}(dz)
\\
&+  \indiq_{\{\Delta<0\}}|\Delta|^{\beta} 
\intrs \left[|1-z/|\Delta||^{\beta} - 1 \right] \nu_{a_+,a_-}(dz).
\end{align*}
In the last inequality and when $\Delta<0$, we have used the 
substitution $x=-z$, which leads to 
$\nu_{a_-,a_+}^\alpha(dz)=\nu_{a_+,a_-}^\alpha(dx)$.
Using finally the substitution $x=z/|\Delta|$, for which
$\nu_{a_-,a_+}^\alpha(dz)=|\Delta|^{-\alpha} \nu_{a_-,a_+}^\alpha(dx)$, we get
\begin{align*}
\lim_{\eta \to 0} J_{a_-,a_+}^{\alpha,\beta,\eta}(\Delta)
=&\indiq_{\{\Delta>0\}}|\Delta|^{\beta-\alpha} 
\intrs \left[|1-x|^{\beta} - 1 \right] \nu_{a_-,a_+}(dx)\\
&+  \indiq_{\{\Delta<0\}}|\Delta|^{\beta-\alpha} 
\intrs \left[|1-x|^{\beta} - 1 \right] \nu_{a_+,a_-}(dx)
\end{align*}
as desired.
\end{proof}

\begin{lem}\label{hyperchiant}
Let $0<\beta\leq 1<\alpha<2$ and $a_-,a_+$ in $[0,\infty)$.
For $\eta>0$ and $x\in \rr$, set $\phi_\eta(x)=(\eta^2+x^2)^{\beta/2}$.
Define, for $\Delta,\delta \in \rr$,
\begin{align*}
\tJ_{a_-,a_+}^{\alpha,\beta,\eta}(\Delta,\delta)
:=&\intrs \left\{\phi_\eta(\Delta+\delta z) -\phi_\eta(\Delta)   
-\delta z \phi_\eta'(\Delta) \right\} \nu_{a_-,a_+}^\alpha(dz),\\
\tK_{a_-,a_+}^{\alpha,\beta,\eta}(\Delta,\delta):=
&\int_{|\delta z|\leq |\Delta|} \left(\phi_\eta(\Delta+\delta z) 
-\phi_\eta(\Delta)   -|\Delta+\delta z|^{\beta} + |\Delta|^{\beta}
\right)^2 \nu_{a_-,a_+}^\alpha(dz),\\
\tL_{a_-,a_+}^{\alpha,\beta,\eta}(\Delta,\delta):=&\int_{|\delta z|> |\Delta|} 
\left|\phi_\eta(\Delta+\delta z) 
-\phi_\eta(\Delta)   -|\Delta+\delta z|^{\beta} + |\Delta|^{\beta}
\right|
\nu_{a_-,a_+}^\alpha(dz).
\end{align*}
For any $\Delta\in\rr_*$, any $\delta\in \rr$,
\begin{align*}
&\lim_{\eta \to 0} \tJ_{a_-,a_+}^{\alpha,\beta,\eta}(\Delta,\delta)
=|\Delta|^{\beta-\alpha}|\delta|^\alpha
\left[\indiq_{\{\delta\Delta>0\}}\tI^{\alpha,\beta}_{a_-,a_+} 
+\indiq_{\{\delta\Delta<0\}}
\tI^{\alpha,\beta}_{a_+,a_-}   \right],\\
&\lim_{\eta \to 0} \tK_{a_-,a_+}^{\alpha,\beta,\eta}(\Delta,\delta)= 
\lim_{\eta \to 0} \tL_{a_-,a_+}^{\alpha,\beta,\eta}(\Delta,\delta) = 0.
\end{align*}
Furthermore, we can find a constant $C$, depending 
only on $\alpha,a_-,a_+,\beta$,
such that for all $\eta>0$, all $\Delta\in \rr_*$, all $\delta \in \rr$,
\begin{align*}
&|\tJ_{a_-,a_+}^{\alpha,\beta,\eta}(\Delta,\delta)|
\leq C|\Delta|^{\beta-\alpha}|\delta|^\alpha,\\
&\tK_{a_-,a_+}^{\alpha,\beta,\eta}(\Delta,\delta)
\leq C |\Delta|^{2\beta-\alpha}|\delta|^\alpha, \\ 
&\tL_{a_-,a_+}^{\alpha,\beta}(\Delta,\delta)
\leq C |\Delta|^{\beta-\alpha}|\delta|^\alpha.
\end{align*}
\end{lem}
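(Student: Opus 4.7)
The plan is to mimic the proof of Lemma~\ref{hyperchiant2}, additionally exploiting the drift-compensation $-\delta z\phi_\eta'(\Delta)$ that appears in $\tJ$ because $\alpha>1$. I would first collect the following $\eta$-uniform pointwise bounds: by direct differentiation, $|\phi_\eta'(x)| \le \beta|x|^{\beta-1}$ and $|\phi_\eta''(x)| \le C(\eta^2+x^2)^{(\beta-2)/2} \le C|x|^{\beta-2}$ for $x \ne 0$; and by observing that $u \mapsto (u+A)^{\beta/2}-(u+B)^{\beta/2}$ is non-increasing in $u\ge 0$ (for $A\ge B\ge 0$), the map $\eta \mapsto |\phi_\eta(x+y)-\phi_\eta(x)|$ is non-increasing, hence $|\phi_\eta(x+y)-\phi_\eta(x)| \le ||x+y|^\beta-|x|^\beta| \le |y|^\beta$. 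Finally $\phi_\eta(x) \to |x|^\beta$ and $\phi_\eta'(x) \to \beta\sg(x)|x|^{\beta-1}$ for $x \ne 0$.

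For the uniform bound on $\tJ^{\alpha,\beta,\eta}_{a_-,a_+}(\Delta,\delta)$ I would split the $z$-integral at the threshold $|\delta z| = |\Delta|/2$. On $\{|\delta z| \le |\Delta|/2\}$, one has $|\Delta+\delta z| \ge |\Delta|/2$, so Taylor's formula of order two gives the integrand-bound $C(\delta z)^2|\Delta|^{\beta-2}$, which integrates against $\nu^\alpha_{a_-,a_+}(dz)$ to a term of order $|\delta|^\alpha|\Delta|^{\beta-\alpha}$ (using $\alpha<2$). On $\{|\delta z| > |\Delta|/2\}$, the triangle inequality with $|\phi_\eta(\Delta+\delta z)-\phi_\eta(\Delta)| \le |\delta z|^\beta$ and $|\phi_\eta'(\Delta)| \le \beta|\Delta|^{\beta-1}$ yields the integrand-bound $|\delta z|^\beta + \beta|\delta z||\Delta|^{\beta-1}$; the first summand integrates to $C|\delta|^\alpha|\Delta|^{\beta-\alpha}$ thanks to $\beta<\alpha$, the second to $C|\delta|^\alpha|\Delta|^{\beta-\alpha}$ thanks to $\alpha>1$.

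For $\tL$, the crude estimate $|\phi_\eta(\Delta+\delta z)-\phi_\eta(\Delta)-|\Delta+\delta z|^\beta+|\Delta|^\beta| \le 2|\delta z|^\beta$, applied on $\{|\delta z|>|\Delta|\}$, already yields $\tL \le C|\delta|^\alpha|\Delta|^{\beta-\alpha}$ via $\beta<\alpha$. For $\tK$ this crude bound fails because the weight $|z|^{-\alpha-1}$ is non-integrable near zero, so on the bulk region $\{|\delta z| \le |\Delta|/2\}$ I would refine via the mean-value theorem: both $\phi_\eta'$ and the derivative of $x \mapsto |x|^\beta$ are controlled by $C|\Delta|^{\beta-1}$ on the segment $[\Delta,\Delta+\delta z]$, so the integrand is $\le C\delta^2 z^2|\Delta|^{2\beta-2}$, which integrates to $C|\delta|^\alpha|\Delta|^{2\beta-\alpha}$. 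On the complementary annulus $\{|\Delta|/2 < |\delta z| \le |\Delta|\}$ the crude bound $4|\delta z|^{2\beta}$ produces the same order by a direct computation, whether $2\beta<\alpha$, $2\beta=\alpha$ or $2\beta>\alpha$.

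With these integrable dominating bounds in hand uniformly in $\eta$, the limits $\tK, \tL \to 0$ follow from dominated convergence because their integrands tend pointwise to $0$ for $z \ne 0$. For $\tJ$, the pointwise limit of the integrand is $|\Delta+\delta z|^\beta - |\Delta|^\beta - \beta \delta z \sg(\Delta)|\Delta|^{\beta-1}$; the change of variable $w = \delta z/\Delta$ reduces this to $|\Delta|^\beta[|1+w|^\beta-1-\beta w]$, and maps $\nu^\alpha_{a_-,a_+}(dz)$ to $|\Delta/\delta|^{-\alpha}\nu^\alpha_{a_-,a_+}(dw)$ if $\Delta\delta>0$ or to $|\Delta/\delta|^{-\alpha}\nu^\alpha_{a_+,a_-}(dw)$ if $\Delta\delta<0$ (since the substitution preserves or flips the sign of $z$), so by (\ref{dftI}) the limit equals exactly the claimed expression. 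The main technical obstacle is the bound on $\tK$: the crude $|\delta z|^\beta$-bound is not enough near $z=0$, and one must extract the near-cancellation between $\phi_\eta$ and $|\cdot|^\beta$ via the mean-value theorem on the bulk region where the derivatives are harmless.
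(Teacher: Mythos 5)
Your proposal is correct and follows essentially the same route as the paper: $\eta$-uniform pointwise bounds of the form $\min\{|\Delta|^{\beta-2}(\delta z)^2,\,|\Delta|^{\beta-1}|\delta z|\}$ and $\min\{|\Delta|^{\beta-1}|\delta z|,\,|\delta z|^{\beta}\}$ obtained by separating $|\delta z|\lessgtr |\Delta|/2$, integration against $|z|^{-\alpha-1}dz$ to get the stated $|\Delta|^{\beta-\alpha}|\delta|^{\alpha}$-type estimates, then dominated convergence and the scaling substitution $w=(\delta/\Delta)z$ (with the $a_\pm$ swap when $\delta\Delta<0$) to identify the limit with $\tI^{\alpha,\beta}_{a_\mp,a_\pm}$. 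The only differences are cosmetic (explicit domain splitting versus the paper's unified min-bounds, and your extra annulus case for $\tK$, which the paper avoids by using the linear bound on all of $\{|\delta z|\le|\Delta|\}$).
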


\begin{proof}  We first observe that there is a constant $C$ such that
for all $\eta>0$,
\begin{align*}
\left|\phi_\eta(\Delta+\delta z) - \phi_\eta(\Delta)  \right| \leq &
C  \min \left\{|\Delta|^{\beta-1} |\delta z|, |\delta z|^\beta\right\},
\\
\left|\phi_\eta(\Delta+\delta z) - \phi_\eta(\Delta)   
-\delta z \phi_\eta'(\Delta) \right|
\leq& C \min \left\{|\Delta|^{\beta-2} (\delta z)^2,|\Delta|^{\beta-1} 
|\delta z|\right\}.
\end{align*}
This is easily deduced from the facts that 
$|\phi_\eta(x+y) - \phi_\eta(x)| \leq |y|^{\beta}$, 
$|\phi_\eta'(x)|\leq \beta|x|^{\beta-1}$,
$|\phi_\eta''(x)|\leq C |x|^{\beta-2}$ and $\beta-1\leq 0$. 
Separate the cases $|\delta z|\leq |\Delta|/2$ and  $|\delta z|\geq |\Delta|/2$.
Similarly,
\begin{align*}
\left| |\Delta+\delta z|^\beta - |\Delta|^\beta  \right| \leq &
C  \min \left\{|\Delta|^{\beta-1} |\delta z|, |\delta z|^\beta\right\}.
\end{align*}

\vip

Next we observe that 
\begin{align*}
&\intrs  \min \left\{|\Delta|^{\beta-2} (\delta z)^2,|\Delta|^{\beta-1} 
|\delta z|\right\}     |z|^{-\alpha-1}dz \\
\leq& |\Delta|^{\beta-2} |\delta|^2 \int_{\{|z|\leq|\Delta|/|\delta|\}}  
|z|^{1-\alpha}dz
+  |\Delta|^{\beta-1} |\delta| \int_{\{|z|\geq|\Delta|/|\delta|\}}  |z|^{-\alpha}dz 
= C  |\Delta|^{\beta-\alpha}|\delta|^\alpha,
\end{align*}
from which we immediately deduce that 
$|\tJ_{a_-,a_+}^{\alpha,\beta,\eta}(\Delta,\delta)|
\leq C|\Delta|^{\beta-\alpha}|\delta|^\alpha$ and that we can apply
Lebesgue's dominated convergence theorem:
\begin{align*}
\lim_{\eta \to 0} \tJ_{a_-,a_+}^{\alpha,\beta,\eta}(\Delta,\delta)
=&\intrs
\left\{|\Delta+\delta z|^{\beta} -|\Delta|^{\beta} -
\beta \delta z. \sg(\Delta)
|\Delta|^{\beta-1} \right\} 
\nu_{a_-,a_+}^\alpha(dz)\\
=& |\Delta|^\beta \intrs
\left\{|1+(\delta/\Delta) z|^{\beta} -1-
\beta (\delta/\Delta) z \right\} 
\nu_{a_-,a_+}^\alpha(dz) \\
=&\indiq_{\{\delta\Delta>0\}} |\Delta|^\beta \intrs
\left\{\Big|1+|\delta/\Delta| z\Big|^{\beta} -1-
\beta |\delta/\Delta| z \right\} 
\nu_{a_-,a_+}^\alpha(dz) \\
&+\indiq_{\{\delta\Delta<0\}}|\Delta|^\beta \intrs
\left\{\Big|1+|\delta/\Delta| z\Big|^{\beta} -1-
\beta |\delta/\Delta| z \right\} 
\nu_{a_+,a_-}^\alpha(dz) \\
=&|\Delta|^{\beta-\alpha}|\delta|^\alpha
\left[\indiq_{\{\delta\Delta>0\}}\tI^{\alpha,\beta}_{a_-,a_+} 
+\indiq_{\{\delta\Delta<0\}}
\tI^{\alpha,\beta}_{a_+,a_-}   \right].
\end{align*}
We finally have put $x=|\delta/\Delta|z$, 
for which
$\nu_{a_-,a_+}^\alpha(dz)=(|\delta|/|\Delta|)^\alpha \nu_{a_-,a_+}^\alpha(dx)$.

\vip

To study $\tK_{a_-,a_+}^{\alpha,\beta,\eta}(\Delta,\delta)$, we note that
\begin{align*}
&\int_{|\delta z|\leq |\Delta|} 
\min \left\{|\Delta|^{\beta-1} |\delta z|, |\delta z|^\beta\right\}^2 
|z|^{-\alpha-1} dz \\
\leq &
|\Delta|^{2\beta-2}|\delta|^2 \int_{|\delta z|\leq |\Delta|} |z|^{1-\alpha} dz= C 
|\Delta|^{2\beta-\alpha}|\delta|^\alpha.
\end{align*}
This implies that $\tK_{a_-,a_+}^{\alpha,\beta,\eta}(\Delta,\delta) \leq C 
|\Delta|^{2\beta-\alpha}|\delta|^\alpha$ and that we can apply the dominated 
convergence theorem to get 
$\lim_{\eta \to 0} \tK_{a_-,a_+}^{\alpha,\beta,\eta}(\Delta,\delta)=0$.

\vip

Finally, 
\begin{align*}
&\int_{|\delta z|> |\Delta|} 
\min \left\{|\Delta|^{\beta-1} |\delta z|, |\delta z|^\beta\right\}
|z|^{-\alpha-1} dz \\
\leq &
|\delta|^\beta \int_{|\delta z|> |\Delta|} |z|^{\beta-\alpha-1} dz = C 
|\Delta|^{\beta-\alpha}|\delta|^\alpha
\end{align*}
implies that $\tL_{a_-,a_+}^{\alpha,\beta,\eta}(\Delta,\delta) \leq C 
|\Delta|^{\beta-\alpha}|\delta|^\alpha$ and that
$\lim_{\eta \to 0} \tL_{a_-,a_+}^{\alpha,\beta,\eta}(\Delta,\delta)=0$.
\end{proof}

\section{The case with infinite variation}\label{p1}

We now have all the weapons in hand to study
pathwise uniqueness when $\alpha \in (1,2)$.
We first prove that we can apply It\^o's formula with the function
$|x|^\beta$.

\begin{lem}\label{ito1}
Let $\alpha\in (1,2)$, $0\leq a_- \leq a_+$ and $\beta \in (0,1]$.
Assume that $\sigma,b$ have at most linear growth and that for some
constant $\kappa_0\geq 0$, for some $\beta \in (0,1]$,

$\bullet$ for all $x,y\in \rr$,
$\sg(x-y)(b(x)-b(y))\leq \kappa_0 |x-y|$,

$\bullet$ $\sigma$ is H\"older-continuous with index $(\alpha-\beta)/\alpha$.

Consider two solutions $(X_t)_{t\geq 0}$
and $(\tX_t)_{t\geq 0}$ to (\ref{sde}) started at $x$ and $\tx$,
driven by the same 
$(\alpha,a_-,a_+)$-stable process $(Z_t)_{t\geq 0}$ defined by (\ref{stable}). 
Put $\Delta_t=X_t-\tX_t$ and
$\delta_t=\sigma(X_t)-\sigma(\tX_t)$.
Then a.s., for all $t\geq 0$,
\begin{align*}
|\Delta_t|^\beta=&|x-\tx|^\beta+ \beta \intot \indiq_{\{\Delta_s\ne 0\}}
|\Delta_s|^{\beta-1}\sg(\Delta_s) [b(X_s)-b(\tX_s)] ds \nonumber\\
&+ \intot
\indiq_{\{\Delta_s \ne 0\}} |\Delta_s|^{\beta-\alpha}|\delta_s|^\alpha
\left(\indiq_{\{\delta_s\Delta_s>0\}}\tI^{\alpha,\beta}_{a_-,a_+} 
+\indiq_{\{\delta_s\Delta_s<0\}}
\tI^{\alpha,\beta}_{a_+,a_-}  \right)ds + M_t,
\end{align*}
where $\tI^{\alpha,\beta}_{a_-,a_+} $ was defined in (\ref{dftI}) and where
$(M_t)_{t\geq 0}$ is the $L^1$-martingale given by
$$
M_t=\intot \intrs \left[|\Delta_\sm+\delta_\sm z|^{\beta}
-|\Delta_\sm|^{\beta}  \right] \tN(dsdz).
$$
\end{lem}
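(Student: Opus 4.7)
The plan is to regularize $x \mapsto |x|^\beta$ by the $C^2$ function $\phi_\eta(x) = (\eta^2 + x^2)^{\beta/2}$, apply the classical It\^o formula, and then pass to the limit $\eta \to 0$ using the estimates of Lemma \ref{hyperchiant}. First I would observe that
$$\Delta_t = (x-\tx) + \intot [b(X_s)-b(\tX_s)]\,ds + \intot\intrs \delta_\sm z\, \tN(ds\,dz)$$
is a semimartingale, and apply It\^o's formula (see Jacod--Shiryaev \cite{js}) to $\phi_\eta(\Delta_t)$, which gives
\begin{align*}
\phi_\eta(\Delta_t) = \phi_\eta(x-\tx) &+ \intot \phi_\eta'(\Delta_s)[b(X_s)-b(\tX_s)]\,ds + \intot \tJ_{a_-,a_+}^{\alpha,\beta,\eta}(\Delta_s,\delta_s)\,ds \\
&+ \intot\intrs [\phi_\eta(\Delta_\sm + \delta_\sm z) - \phi_\eta(\Delta_\sm)]\, \tN(ds\,dz),
\end{align*}
where the third term is precisely the compensator of the last one.

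Next, I would pass to the limit $\eta \to 0$ in each of the three non-stochastic terms. The drift term converges, pointwise and boundedly, to
$$\beta \intot \indiq_{\{\Delta_s \ne 0\}}|\Delta_s|^{\beta-1}\sg(\Delta_s)[b(X_s)-b(\tX_s)]\,ds,$$
using the one-sided Lipschitz assumption on $b$ together with $|\phi_\eta'(x)|\leq \beta |x|^{\beta-1}$ (integrability follows from the linear growth of $b$ and Proposition \ref{weak}(ii)). The key point for the compensator term is that the H\"older hypothesis on $\sigma$ gives $|\delta_s|^\alpha \leq C|\Delta_s|^{\alpha-\beta}$, so that the bound $|\tJ_{a_-,a_+}^{\alpha,\beta,\eta}(\Delta_s,\delta_s)|\leq C|\Delta_s|^{\beta-\alpha}|\delta_s|^\alpha$ of Lemma \ref{hyperchiant} becomes simply $C$; combining with the pointwise limit identified there and dominated convergence yields the stated $\tI$-integral (the set $\{\Delta_s = 0\}$ contributing nothing since $\delta_s = 0$ there).

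For the stochastic integral, I would set
$$G_\eta(\Delta,\delta,z) := [\phi_\eta(\Delta+\delta z) - \phi_\eta(\Delta)] - [|\Delta+\delta z|^\beta - |\Delta|^\beta]$$
and split the integration at $|\delta z| \leq |\Delta|$ versus $|\delta z| > |\Delta|$. The small-jump part can be controlled in $L^2$ via the compensated Poisson isometry together with $\tK_{a_-,a_+}^{\alpha,\beta,\eta}(\Delta_s,\delta_s) \leq C|\Delta_s|^{2\beta-\alpha}|\delta_s|^\alpha \leq C|\Delta_s|^\beta$. The large-jump part can be controlled in $L^1$ via $\E|\int f\,d\tN| \leq 2\E\int|f|\,\nu(dz)ds$ and $\tL_{a_-,a_+}^{\alpha,\beta,\eta}(\Delta_s,\delta_s) \leq C|\Delta_s|^{\beta-\alpha}|\delta_s|^\alpha \leq C$. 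Proposition \ref{weak}(ii) provides time-integrability of both upper bounds, while the vanishing statements of Lemma \ref{hyperchiant} combined with dominated convergence give convergence of the stochastic integral to $M_t$ in $L^1$; the same two estimates, applied directly to the jumps $|\Delta_\sm+\delta_\sm z|^\beta-|\Delta_\sm|^\beta$, show simultaneously that $M_t$ is a well-defined $L^1$-martingale.

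The main obstacle is precisely the control near $\{\Delta_s = 0\}$: the three kernels $\tJ, \tK, \tL$ all blow up like $|\Delta|^{\beta-\alpha}$ with $\beta - \alpha < 0$, and it is only the H\"older bound $|\delta_s|^\alpha \leq C|\Delta_s|^{\alpha-\beta}$ that keeps the integrands uniformly bounded in $\eta$. This explains why the H\"older exponent $(\alpha-\beta)/\alpha$ imposed on $\sigma$ is exactly the threshold required for the It\^o expansion to pass to the limit.
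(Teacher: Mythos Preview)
Your proposal follows essentially the same route as the paper: regularize by $\phi_\eta$, apply It\^o, use Lemma \ref{hyperchiant} for the compensator, and split the stochastic integral at $|\delta z|\lessgtr|\Delta|$ controlling the two pieces in $L^2$ (via $\tK$) and $L^1$ (via $\tL$) exactly as the paper does.

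The one place where your write-up is looser than the paper is the drift term. You say it ``converges, pointwise and boundedly'', but $|\phi_\eta'(\Delta_s)|\,|b(X_s)-b(\tX_s)|\leq \beta|\Delta_s|^{\beta-1}|b(X_s)-b(\tX_s)|$ is \emph{not} bounded by anything integrable from linear growth alone, since $|\Delta_s|^{\beta-1}$ blows up near $\{\Delta_s=0\}$ and the one-sided condition gives no control on $(\sg(\Delta_s)[b(X_s)-b(\tX_s)])_-$. The paper handles this by splitting $A^\eta_s=A^{\eta,+}_s-A^{\eta,-}_s$: the positive part is dominated by $\beta\kappa_0|\Delta_s|^\beta$ via the one-sided Lipschitz bound (dominated convergence), while the negative part is monotone in $\eta$ (since $|\phi_\eta'(x)|\uparrow\beta|x|^{\beta-1}$) and is treated by monotone convergence. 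With this small adjustment your argument matches the paper's.
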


\begin{proof}
For $\eta>0$, consider $\phi_\eta(x)=(\eta^2+x^2)^{\beta/2}$
as in Lemma \ref{hyperchiant}.
Applying the It\^o formula, see e.g. Jacod-Shiryaev 
\cite[Theorem 4.57 p 56]{js}, 
we get, recalling (\ref{stable}),
\begin{align*}
\phi_\eta(\Delta_t)=&\phi_\eta(x-\tx)+ 
\intot \intrs \left[\phi_\eta(\Delta_\sm+\delta_\sm z) - 
\phi_\eta(\Delta_\sm) \right] \tN(dsdz) \nonumber\ala
&+ \intot\intrs
\left[ \phi_\eta(\Delta_\sm+\delta_\sm z) - \phi_\eta(\Delta_\sm)
- \delta_\sm z  \phi'_\eta(\Delta_\sm) \right] \nu_{a_-,a_+}^\alpha(dz)\; ds\ala
&+ \int_0^t \phi_\eta'(\Delta_\sm)[b(X_s)-b(\tX_s)]ds\\
=:& \phi_\eta(x-\tx) + M^\eta_t + \intot \tJ_{a_-,a_+}^{\alpha,\beta,\eta}
(\Delta_s,\delta_s) ds + \intot A_s^\eta ds,
\end{align*}
where $\tJ_{a_-,a_+}^{\alpha,\beta,\eta}(\Delta,\delta)$
was defined in Lemma \ref{hyperchiant}.
First, we clearly have a.s.
$$
\lim_{\eta\to 0} \phi_\eta(\Delta_t)=
|\Delta_t|^{\beta} \quad \hbox{and} \quad
\lim_{\eta\to 0} \phi_\eta(x-\tx)=|x-\tx|^{\beta}.
$$
Next, we observe that $\tJ_{a_-,a_+}^{\alpha,\beta,\eta}(\Delta_t,\delta_t)=
\tJ_{a_-,a_+}^{\alpha,\beta,\eta}(\Delta_t,\delta_t)\indiq_{\{\Delta_t\ne 0\}}$, 
since 
$\Delta_t=0$ implies that $\delta_t=0$.
Since $\sigma$ is H\"older-continuous with index $(\alpha-\beta)/\alpha$ 
by assumption, 
we deduce that $|\Delta_t|^{\beta-\alpha}|\delta_t|^\alpha$ 
is uniformly bounded.
Thus, using Lemma \ref{hyperchiant} and  
Lebesgue's dominated convergence theorem, we get a.s.
\begin{align*}
&\lim_{\eta\to 0}  \intot 
\tJ_{a_-,a_+}^{\alpha,\beta,\eta}(\Delta_s,\delta_s) ds\\
=& \intot
\indiq_{\{\Delta_s \ne 0\}} |\Delta_s|^{\beta-\alpha}|\delta_s|^\alpha
\left(\indiq_{\{\delta_s\Delta_s>0\}}\tI^{\alpha,\beta}_{a_-,a_+} 
+\indiq_{\{\delta_s\Delta_s<0\}}
\tI^{\alpha,\beta}_{a_+,a_-}  \right)ds.
\end{align*}
Since $\phi_\eta'(x)=\beta x (\eta^2+x^2)^{(\beta-2)/2}$, we may write
$A^\eta_t= A^{\eta,+}_t-A^{\eta,-}_t$, where
\begin{align*}
A^{\eta,+}_t=& \beta |\Delta_t| (\Delta_t^2+\eta^2)^{(\beta-2)/2} 
\left(\sg(\Delta_t)[b(X_t)-b(\tX_t)]\right)_+,\\
A^{\eta,-}_t=& \beta |\Delta_t| (\Delta_t^2+\eta^2)^{(\beta-2)/2} 
\left(\sg(\Delta_t)[b(X_t)-b(\tX_t)]\right)_-.
\end{align*}
First, $\lim_{\eta\to 0} \intot A^{\eta,-}_s ds = 
\beta \intot \indiq_{\{\Delta_s\ne 0\}}
|\Delta_s|^{\beta-1} 
(\sg(\Delta_s)[b(X_s)-b(\tX_s)])_- ds$ by the monotone
convergence theorem. Next, our assumption on $b$ implies that 
$A^{\eta,+}_t \leq \beta \kappa_0 |\Delta_t|^{\beta}$. 
Hence we can apply the dominated convergence theorem to compute 
$\lim_{\eta\to 0} \intot A^{\eta,+}_s ds$ and we finally get a.s.,
$$
\lim_{\eta \to 0} \intot A^\eta_s ds =  \beta \intot \indiq_{\{\Delta_s\ne 0\}}
|\Delta_s|^{\beta-1} 
\sg(\Delta_s)[b(X_s)-b(\tX_s)] ds
$$
as desired. It only remains to prove that $M^\eta_t$ tends to $M_t$ in $L^1$.
We write $M_t=M_t^1+M_t^2$ and $M_t^\eta=M_t^{\eta,1}+M_t^{\eta,2}$, where
\begin{align*}
M^1_t=&\intot\intrs \indiq_{\{|\delta_\sm z|\leq |\Delta_\sm|\}}
\left[|\Delta_\sm+\delta_\sm z|^{\beta}-|\Delta_\sm|^{\beta}  \right] \tN(dsdz),\\
M^2_t=&\intot\intrs \indiq_{\{|\delta_\sm z|> |\Delta_\sm|\}}
\left[|\Delta_\sm+\delta_\sm z|^{\beta}-|\Delta_\sm|^{\beta}  \right] \tN(dsdz),\\
M^{1,\eta}_t=&\intot\intrs \indiq_{\{|\delta_\sm z|\leq |\Delta_\sm|\}}
\left[\phi_\eta(\Delta_\sm+\delta_\sm z) - \phi_\eta(\Delta_\sm)\right] \tN(dsdz),\\
M^{2,\eta}_t=&\intot\intrs \indiq_{\{|\delta_\sm z|> |\Delta_\sm|\}}
\left[\phi_\eta(\Delta_\sm+\delta_\sm z) - \phi_\eta(\Delta_\sm)\right] \tN(dsdz).
\end{align*}
Using Lemma \ref{hyperchiant} and
Lebesgue's dominated convergence theorem, there holds
$$
\lim_{\eta\to0}\E\left[|M^1_t-M^{1,\eta}_t|^2\right] = \lim_{\eta\to0}
\intot \E\left[\tK_{a_-,a_+}^{\alpha,\beta,\eta}(\Delta_s,\delta_s)  \right] ds=0,
$$
since $\tK_{a_-,a_+}^{\alpha,\beta,\eta}(\Delta_s,\delta_s)\leq C 
|\Delta_t|^{2\beta-\alpha}|\delta_t|^\alpha\leq C |\Delta_t|^\beta$
and since $\E[\sup_{[0,t]} |\Delta_s|^\beta]<\infty$ by 
Proposition \ref{weak}-(ii). Similarly, writing $\tN(dsdz)=N(dsdz)-
\nu_{a_-,a_+}^\alpha(dz)\; dz$, 
$$
\lim_{\eta\to0}\E\left[|M^2_t-M^{2,\eta}_t|\right] \leq 2\lim_{\eta\to0} 
\intot \E\left[\tL_{a_-,a_+}^{\alpha,\beta,\eta}(\Delta_s,\delta_s)  \right] ds=0,
$$
because $\tL_{a_-,a_+}^{\alpha,\beta,\eta}(\Delta_s,\delta_s)\leq C 
|\Delta_t|^{\beta-\alpha}|\delta_t|^\alpha\leq C$. This ends the proof.
\end{proof}

\begin{preuve} {\it of Theorem \ref{mr1}.} 
We thus consider $\alpha \in (1,2)$, $0\leq a_-\leq a_+$ and 
two solutions $(X_t)_{t\geq 0}$ and $(\tX_t)_{t\geq 0}$ to (\ref{sde}).
We put $\Delta_t=X_t-\tX_t$
and $\delta_t=\sigma(X_t)-\sigma(\tX_t)$. We set
$\beta=\beta(\alpha,a_-/a_+)\in [\alpha-1,1]$ 
as in Lemma \ref{acos0} and we use Lemma \ref{ito1}. With our choice
for $\beta$, there holds $\tI^{\alpha,\beta}_{a_-,a_+}=0$ by Lemma \ref{basic}. 
We thus find
\begin{align*}
|\Delta_t|^\beta=&|x-\tx|^\beta+ \beta \intot \indiq_{\{\Delta_s\ne 0\}}
|\Delta_s|^{\beta-1}\sg(\Delta_s) [b(X_s)-b(\tX_s)] ds \nonumber\\
&+ C\indiq_{\{a_-\ne a_+\}} \intot \indiq_{\{\Delta_s \delta_s<0\}}
|\Delta_s|^{\beta-\alpha}
|\delta_s|^\alpha ds +M_t,
\end{align*}
where $C=I^{\alpha,\beta}_{a_+,a_-}$ and where $(M_t)_{t\geq 0}$ is a 
$L^1$-martingale.

\vip

{\it Step 1.} We prove point (i). Due to our assumption on $b$,
$\sg(\Delta_s)[b(X_s)-b(\tX_s)] \leq \kappa_0|\Delta_s|$.
If $a_-=a_+$, we thus get,
taking expectations, $\E[|\Delta_t|^\beta] \leq |x-\tx|^\beta
+ \beta \kappa_0 \intot \E[|\Delta_s|^\beta] ds$ and we conclude with 
the Gronwall Lemma. If $a_-<a_+$, our assumption on $\sigma$
guarantees us that if
$\delta_s \Delta_s<0$, then $|\delta_s| \leq \kappa_1 |\Delta_s| /(a_+-a_-)$,
whence $|\Delta_s|^{\beta-\alpha}
|\delta_s|^\alpha\indiq_{\{\delta_s\Delta_s<0\}} \leq C |\Delta_s|^\beta$.
Hence taking expectations, we get $\E[|\Delta_t|^\beta] \leq |x-\tx|^\beta
+ C \intot \E[|\Delta_s|^\beta] ds$:
we also conclude with the Gronwall lemma.

\vip

{\it Step 2.} We check point (ii). Assuming that $(a_+-a_-)\sigma$ 
is non-decreasing,  we deduce that either $a_-=a_+$
or for all $s\geq 0$, a.s., $\delta_s\Delta_s\geq 0$. If furthermore
$b$ is constant, we thus get 
$|\Delta_t|^\beta=|x-\tx|^\beta +M_t$, whence 
$\E[|\Delta_t|^\beta]=|x-\tx|^\beta$.
\end{preuve}

We now study the large time behavior of solutions when $a_-=a_+$.

\begin{preuve} {\it of Proposition \ref{inv}.}
We thus assume that $a_-=a_+>0$, that $\alpha \in (1,2)$, that $b$ 
is non-increasing and continuous
and that $\sigma$ is H\"older-continuous with index $1/\alpha$.

\vip

{\it Step 1.} Consider any pair of solutions $(X_t)_{t\geq 0}$,
$(\tX_t)_{t\geq 0}$ to (\ref{sde}) driven by the same stable process
and set, as usual, $\Delta_t=X_t-\tX_t$, $\delta_t=\sigma(X_t)-\sigma(\tX_t)$.
Lemma \ref{ito1} with $\beta=\beta(\alpha,1)=\alpha-1$ implies,
since 
$I_{a_-,a_+}^{\alpha,\beta}=I_{a_+,a_-}^{\alpha,\beta}=0$
by Lemma \ref{basic}, that
\begin{align*}
|\Delta_t|^{\alpha-1}=&|x-\tx|^{\alpha-1}+M_t +  
(\alpha-1)\int_0^t\sg(\Delta_s)|\Delta_s|^{\alpha-2}[b(X_s)-b(\tX_s)]ds,
\end{align*}
where 
$M_t=\intot \intrs \left(|\Delta_\sm+\delta_\sm z|^{\alpha-1}
-|\Delta_\sm|^{\alpha-1}  \right)\tN(dsdz)$.
Using that $b$ is non-increasing, we deduce that
\begin{align}\label{no1}
|\Delta_t|^{\alpha-1}+  
(\alpha-1)\int_0^t|\Delta_s|^{\alpha-2}|b(X_s)-b(\tX_s)|ds
=&|x-\tx|^{\alpha-1}+M_t=:U_t.
\end{align}
Consequently, $U_t$ is a non-negative martingale.
Thus it a.s. converges as $t\to\infty$, as well as its bracket:
\begin{align*}
\int_0^\infty \intrs \left[|\Delta_s+\delta_s z|^{\alpha-1}
-|\Delta_s|^{\alpha-1} \right]^2 \nu_{a_-,a_+}^\alpha(dz)\; ds<\infty.
\end{align*}
But if $\Delta_s \ne 0$, since $a_-=a_+$, setting $c=a_+ \intrs
[|1+x|^{\alpha-1}-1]^2 |x|^{-\alpha-1}dx >0$,
\begin{align*}
&\intrs \left[|\Delta_s+\delta_s z|^{\alpha-1}
-|\Delta_s|^{\alpha-1} \right]^2 \nu_{a_-,a_+}^\alpha(dz)\\
=&a_+|\Delta_s|^{2\alpha-2}
\intrs \left[|1+\delta_s z/\Delta_s|^{\alpha-1}
-1 \right]^2|z|^{-\alpha-1}dz \\
=&c |\Delta_s|^{2\alpha-2} (|\delta_s/\Delta_s|)^\alpha=
c  |\Delta_s|^{\alpha-2}|\delta_s|^\alpha,
\end{align*}
whence 
\begin{align}\label{esti1}
\int_0^\infty \indiq_{\{\Delta_s\ne 0\}} |\Delta_s|^{\alpha-2}|\delta_s|^\alpha 
ds <\infty \quad\hbox{a.s.}
\end{align}
We also have $\sup_{[0,\infty)} U_t <\infty$, whence, due to (\ref{no1}),
\begin{align}\label{esti2}
\int_0^\infty|\Delta_s|^{\alpha-2}|b(X_s)-b(\tX_s)|ds <\infty \quad\hbox{a.s.}
\end{align}
Finally, Doob's $L^1$ inequality (see e.g. Revuz-Yor 
\cite[Theorem 1.7 p 54]{ry}) implies, since $U_t$
is a non-negative $L^1$ c\`adl\`ag martingale, that for any $a>0$, 
$$
\Pr[\sup_{[0,\infty)}U_t \geq a]\leq a^{-1}\sup_{[0,\infty)}\E[U_t]=
a^{-1}|x-\tx|^{\alpha-1}.
$$ 
But $\sup_{[0,\infty)}|\Delta_t|^{\alpha-1}\leq
\sup_{[0,\infty)}U_t$ by (\ref{no1}). Hence for any $\beta\in (0,\alpha-1)$, for any
$c>0$,
\begin{align*}
\E\left[\sup_{[0,\infty)}|\Delta_t|^\beta \right]=&
\int_0^\infty \Pr\left[\sup_{[0,\infty)}|\Delta_t|^{\alpha-1} 
\geq a^{(\alpha-1)/\beta} \right] da\\
\leq & c + \int_c^\infty a^{-(\alpha-1)/\beta}|x-\tx|^{\alpha-1} da \\
=& c + \frac{\beta}{\alpha-1-\beta}|x-\tx|^{\alpha-1} c^{1-(\alpha-1)/\beta}.
\end{align*}
Choose $c=|x-\tx|^{\beta}$: for some constant $C$ 
depending only on $\alpha,\beta$,
\begin{align}\label{esti3}
\E\left[\sup_{[0,\infty)}|\Delta_t|^\beta\right]\leq C |x-\tx|^{\beta}.
\end{align}

\vip

{\it Step 2.} We now prove point (i).
Consider two invariant distributions $Q$ and $\tQ$ for
(\ref{sde}). Let $X_0\sim Q$ and $\tX_0\sim \tQ$ 
be two random variables independent of $(Z_t)_{t\geq 0}$.
Consider the associated solutions $(X_t)_{t\geq 0}$ and $(\tX_t)_{t\geq 0}$
to (\ref{sde}) starting from $X_0$ and $\tX_0$ 
(pathwise existence holds for (\ref{sde}): we have checked pathwise 
uniqueness in Theorem \ref{mr1} 
and weak existence in Proposition \ref{weak}).
Then we have $X_t\sim Q$ and $\tX_t\sim \tQ$ for all $t\geq 0$.
>From (\ref{esti1}) and (\ref{esti2}), we have a.s.
$$
\int_0^\infty \Gamma(X_t,\tX_t) dt <\infty,
$$
where 
\begin{align*}
\Gamma(x,y):=&(1+|x-y|)^{\alpha-2} 
[|b(x)-b(y)|+|\sigma(x)-\sigma(y)|^\alpha] \\
\leq&
\indiq_{\{x \ne y\}}|x-y|^{\alpha-2} 
[|b(x)-b(y)|+|\sigma(x)-\sigma(y)|^\alpha].
\end{align*}
We easily deduce, see e.g. \cite[Lemma 10]{fp}, that there is a deterministic
sequence $(t_n)_{n\geq 1}$ increasing to infinity such that 
$\Gamma(X_{t_n},\tX_{t_n})$ goes to $0$ in probability.
Since $(\sigma,b)$ is injective by assumption, we have $\Gamma(x,y) >0$ 
for all $x \ne y$. Furthermore, $\Gamma$ is continuous and
$X_{t_n}\sim Q$ and $\tX_{t_n}\sim \tQ$ for all $n\geq 1$. We thus
infer from \cite[Lemma 11]{fp} that $Q=\tQ$.

\vip

{\it Step 3.} 
We next prove point (ii). Consider two solutions $(X_t)_{t\geq 0}$
and $(\tX_t)_{t\geq 0}$ to (\ref{sde}), issued from $x$ and $\tx$.
Using our assumptions and (\ref{esti1})-(\ref{esti2}), we get
$$
\int_0^\infty \rho(|X_t-\tX_t|)dt <\infty.
$$
Hence, see e.g. \cite[Lemma 10]{fp}, there is a deterministic
sequence $(t_n)_{n\geq 1}$ increasing to infinity such that
$\rho(|X_{t_n}-\tX_{t_n}|)$ goes to $0$ in probability. Since
$\rho$ is strictly increasing and vanishes only at $0$, we deduce that
$|X_{t_n}-\tX_{t_n}|$ goes to $0$ in probability. We thus infer 
from (\ref{esti3}), choosing e.g. $\beta=(\alpha-1)/2$, that
$$
\E\left[\left. \sup_{[t_n,\infty)}|X_t-\tX_t|^\beta \right\vert \cF_{t_n}
\right] \leq C |X_{t_n}-\tX_{t_n}|^\beta.
$$
We used that conditionally on $\cF_{t_n}$, $(X_{t_n+t})_{t\geq 0}$
and $(\tX_{t_n+t})_{t\geq 0}$ solve (\ref{sde}). We easily deduce that
$\sup_{[t_n,\infty)}|X_t-\tX_t|$ tends to $0$ in probability. Since finally
$s\mapsto \sup_{[s,\infty)}|X_t-\tX_t|$ is non-increasing, it a.s.
admits a limit as $s\to \infty$ and this limit can
only be $0$.
\end{preuve}

\section{The case with finite variation}\label{p2}

We now study the case where $\alpha\in (0,1)$.
Here again, we first prove that we can apply It\^o's formula with the function
$|x|^\beta$.

\begin{lem}\label{ito2}
Let $\alpha\in (0,1)$, $0\leq a_- \leq a_+$ and $\beta \in (0,\alpha)$.
Assume that $\sigma,b$ have at most linear growth and that for some
constant $\kappa_0\geq 0$, for some $\beta \in (0,1]$,

$\bullet$ for all $x,y\in \rr$,
$\sg(x-y)(b(x)-b(y))\leq \kappa_0 |x-y|$,

$\bullet$ $\sigma$ is H\"older-continuous with index $\theta$ for some
$\theta\in [\alpha-\beta,\alpha]$.

Consider two solutions $(Y_t)_{t\geq 0}$
and $(\tY_t)_{t\geq 0}$ to (\ref{sde}) started at $x$ and $\tx$,
driven by the same Poisson measure $M$.
Put $\Delta_t=Y_t-\tY_t$. Then for all $t\geq 0$, recall (\ref{dfI}),
\begin{align*}
\E[|\Delta_t|^\beta]\leq &|x-\tx|^\beta + \beta \kappa_0 \intot \E\left[
|\Delta_s|^{\beta}\right] ds\\
&+ \intot \E\left[(\gamma(Y_s)-\gamma(\tY_s))_+|\Delta_s|^{\beta-\alpha}
[\indiq_{\{\Delta_s>0\}}
I^{\alpha,\beta}_{a_+,a_-} + \indiq_{\{\Delta_s<0\}}I^{\alpha,\beta}_{a_-,a_+}
] \right] ds\\
&+ \intot \E\left[(\gamma(\tY_s)-\gamma(Y_s))_+|\Delta_s|^{\beta-\alpha}
[\indiq_{\{\Delta_s>0\}}
I^{\alpha,\beta}_{a_-,a_+} + \indiq_{\{\Delta_s<0\}}I^{\alpha,\beta}_{a_+,a_-}
] \right] ds,
\end{align*}
with an equality and $\kappa_0=0$ if $b$ is constant.
\end{lem}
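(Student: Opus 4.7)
The plan is to apply It\^o's formula to $\phi_\eta(\Delta_t)$ for the regularization $\phi_\eta(x)=(\eta^2+x^2)^{\beta/2}$ and then send $\eta\to 0$, in direct parallel with the proof of Lemma~\ref{ito1}. The new feature, compared with that lemma, is the description of the jumps of $\Delta_t = Y_t-\tY_t$: a Poisson atom $(s,z,u)$ of $M$ contributes a jump of $Y$ equal to $z\,I_{\gamma}(u)$ and a jump of $\tY$ equal to $z\,I_{\tilde\gamma}(u)$, where we abbreviate $\gamma = \gamma(Y_{s-})$, $\tilde\gamma = \gamma(\tY_{s-})$, and $I_{\gamma}(u) = \indiq_{\{0<u<\gamma\}} - \indiq_{\{\gamma<u<0\}}$. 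Since $\alpha\in(0,1)$ the process $\Delta$ has finite variation, so no compensator appears in It\^o's formula beyond the drift and the pure-jump term:
\begin{align*}
\phi_\eta(\Delta_t) = \phi_\eta(x-\tx) + \int_0^t \phi_\eta'(\Delta_s)[b(Y_s)-b(\tY_s)]\,ds + \int_0^t\!\int_{\rr_*}\!\int_{\rr_*}\bigl[\phi_\eta(\Delta_{s-}+z[I_{\gamma}-I_{\tilde\gamma}])-\phi_\eta(\Delta_{s-})\bigr]M(dsdzdu).
\end{align*}

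After checking the integrability needed to replace $M$ by its compensator $ds\,\nu^\alpha_{a_-,a_+}(dz)\,du$ (using the pointwise bound $|\phi_\eta(\Delta+j)-\phi_\eta(\Delta)|\leq C\min(|j|^\beta,|\Delta|^{\beta-1}|j|)$ together with Proposition~\ref{weak}(ii) and the H\"older/linear-growth control on $\sigma$), I would take expectation and perform a case analysis on the signs of $\gamma$ and $\tilde\gamma$. The key observation is that in each of the six sign configurations, the set $\{u : I_{\gamma}(u)\ne I_{\tilde\gamma}(u)\}$ is a single interval on which $z[I_{\gamma}-I_{\tilde\gamma}]$ equals either $+z$ (on an interval of Lebesgue length $(\gamma(Y_s)-\gamma(\tY_s))_+$) or $-z$ (on an interval of length $(\gamma(\tY_s)-\gamma(Y_s))_+$). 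Performing the $u$-integration then collapses the triple integral into
\begin{align*}
(\gamma(Y_s)-\gamma(\tY_s))_+\,J^{\alpha,\beta,\eta}_{a_+,a_-}(\Delta_s) + (\gamma(\tY_s)-\gamma(Y_s))_+\,J^{\alpha,\beta,\eta}_{a_-,a_+}(\Delta_s),
\end{align*}
the $a_-\leftrightarrow a_+$ swap on the first term originating from the substitution $z\mapsto -z$ that converts $\int[\phi_\eta(\Delta+z)-\phi_\eta(\Delta)]\nu^\alpha_{a_-,a_+}(dz)$ into $J^{\alpha,\beta,\eta}_{a_+,a_-}(\Delta)$.

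To conclude, I would pass to the limit $\eta\to 0$. Lemma~\ref{hyperchiant2} supplies both the pointwise limit of the kernel $J^{\alpha,\beta,\eta}_{\cdot,\cdot}(\Delta_s)$ and the uniform bound $|J^{\alpha,\beta,\eta}_{\cdot,\cdot}(\Delta_s)| \leq C|\Delta_s|^{\beta-\alpha}$; combined with the H\"older control inherited by $\gamma$ from the assumption on $\sigma$, which tames the $(\gamma(Y_s)-\gamma(\tY_s))_\pm$ factor against the singular weight $|\Delta_s|^{\beta-\alpha}$, dominated convergence applies to the jump integral. For the drift integrand $\phi_\eta'(\Delta_s)[b(Y_s)-b(\tY_s)]$ I would split into positive and negative parts exactly as in the proof of Lemma~\ref{ito1}: thanks to the monotonicity assumption on $b$, the positive part is uniformly (in $\eta$) bounded by $\beta\kappa_0|\Delta_s|^\beta$, yielding the $\beta\kappa_0\int_0^t\E[|\Delta_s|^\beta]\,ds$ contribution via dominated convergence, while the negative part may be discarded since we only seek an upper bound. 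Finally $\phi_\eta(\Delta_t)\downarrow|\Delta_t|^\beta$ monotonically, so monotone convergence on the left delivers $\E[|\Delta_t|^\beta]$. When $b$ is constant one may take $\kappa_0=0$ and the drift term vanishes identically, so the inequality becomes an equality.

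The main obstacle is the six-way case analysis on the signs of $\gamma(Y_{s-})$ and $\gamma(\tY_{s-})$ and the verification that all cases coalesce into the clean formula above; a secondary technical point is the integrability bookkeeping needed to discard the compensated-Poisson martingale and to invoke dominated convergence in the limit $\eta\to 0$.
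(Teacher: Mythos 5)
Your proposal is correct and follows essentially the same route as the paper: Itô's formula applied to $\phi_\eta(\Delta_t)$, the observation that the difference of the two $u$-indicators reduces to a single signed interval of length $|\gamma(Y_{s-})-\gamma(\tY_{s-})|$ (the paper writes this as one identity rather than a six-way case check), integration in $u$ producing the kernels $J^{\alpha,\beta,\eta}_{a_+,a_-}$ and $J^{\alpha,\beta,\eta}_{a_-,a_+}$ via the substitution $z\mapsto -z$, integrability from Proposition \ref{weak}(ii) and the Hölder control on $\gamma$, and the limit $\eta\to 0$ via Lemma \ref{hyperchiant2} and dominated convergence. The only cosmetic difference is the drift term, which the paper bounds pointwise by $\beta\kappa_0|\Delta_s|^\beta$ before taking limits rather than splitting into positive and negative parts.
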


\begin{proof}
We define, for 
$y,\ty \in \rr$ and $u \in \rr_*$, 
$$
\Gamma(y,\ty,u)=\indiq_{\{0<u<\gamma(y)\}}-\indiq_{\{\gamma(y)<u<0\}}
-\indiq_{\{0<u<\gamma(\ty)\}}+\indiq_{\{\gamma(\ty)<u<0\}}.
$$
Let also $\phi_\eta(x)=(\eta^2+x^2)^{\beta/2}$.
Applying the It\^o formula for jump processes, see e.g.
\cite[Theorem 4.57 p 56]{js}, we get
\begin{align*}
\phi_\eta(\Delta_t)=& \phi_\eta(x-\tx) 
+\intot \phi_\eta'(\Delta_s)(b(Y_s)-b(\tY_s))ds\\
&+\intot \intrs \intrs
\left[\phi_\eta\left(\Delta_\sm+z\Gamma(Y_\sm,\tY_\sm,u)\right) 
-\phi_\eta(\Delta_\sm)\right]
M(dsdzdu).
\end{align*}
First, since $|\phi_\eta'(x)|\leq \beta |x|^{\beta-1}$ and since
$\sg(\phi_\eta'(x))=\sg(x)$, we deduce from our assumption on $b$
that for any $\eta>0$, any $y,\ty$, 
\begin{align*}
\phi_\eta'(y-\ty) (b(y)-b(\ty))
=&|\phi_\eta'(y-\ty)| \sg(y-\ty)(b(y)-b(\ty))\\
\leq& |\phi_\eta'(y-\ty)| \kappa_0|y-\ty|\\
\leq &
\beta\kappa_0 |y-\ty|^\beta. 
\end{align*}
We deduce that
\begin{align}\label{abcd}
\phi_\eta(\Delta_t)\leq & \phi_\eta(x-\tx) + \beta \kappa_0 \intot
|\Delta_s|^\beta ds \nonumber \\
&+ \intot \intrs\intrs 
\left[\phi_\eta\left(\Delta_\sm+z\Gamma(Y_\sm,\tY_\sm,u)\right) 
-\phi_\eta(\Delta_\sm)\right]
M(dsdzdu),
\end{align}
with of course an equality and $\kappa_0=0$ if $b$ is constant.
Observe now that for any $y,\ty \in \rr$, any $u\in \rr_*$,
$$
\Gamma(y,\ty,u)=\indiq_{\{\gamma(\ty)<u<\gamma(y)\}} 
-\indiq_{\{\gamma(y)<u<\gamma(\ty)\}} .
$$
Hence integrating in $u$ and recalling Lemma \ref{hyperchiant2},
\begin{align*}
&\intrs \intrs \left|\phi_\eta\left(\Delta_\sm+z\Gamma(Y_\sm,\tY_\sm,u)\right) 
-\phi_\eta(\Delta_\sm)\right| \nu_{a_-,a_+}^\alpha(dz) \; du\ala
=&(\gamma(Y_s)-\gamma(\tY_s))_+ \intrs
\left|\phi_\eta\left(\Delta_s+z\right)-\phi_\eta(\Delta_s)\right| 
\nu_{a_-,a_+}^\alpha(dz)\ala
&+(\gamma(\tY_s)-\gamma(Y_s))_+ \intrs
\left|\phi_\eta\left(\Delta_s-z\right)-\phi_\eta(\Delta_s)\right| 
\nu_{a_-,a_+}^\alpha(dz)\ala
=& (\gamma(Y_s)-\gamma(\tY_s))_+ K_{a_+,a_-}^{\alpha,\beta,\eta}(\Delta_s,\delta_s)
+(\gamma(\tY_s)-\gamma(Y_s))_+K_{a_-,a_+}^{\alpha,\beta,\eta}(\Delta_s,\delta_s)\ala
\leq & C |\gamma(Y_s)-\gamma(\tY_s)|. |\Delta_s|^{\beta-\alpha}.
\end{align*}
Since $\gamma$ is H\"older-continuous with index $\theta$, this is bounded
by $C |\Delta_s|^{\theta-\alpha+\beta}$. 
Using Proposition \ref{weak}-(ii) and that $\theta-\alpha+\beta \in [0,\alpha)$,
we can thus take expectations in 
(\ref{abcd}):
\begin{align}\label{abcde}
\E[\phi_\eta(\Delta_t)]\leq& \phi_\eta(|x-\tx|) + \beta \kappa_0 \intot
\E\left[|\Delta_s|^\beta\right] ds + \intot \E \left[B^\eta_s\right]ds,
\end{align}
(with an equality and $\kappa_0=0$ if $b$ is constant), where
\begin{align*}
B^\eta_s =\intrs\intrs
\left[\phi_\eta\left(\Delta_\sm+z\Gamma(Y_\sm,\tY_\sm,u)\right) 
-\phi_\eta(\Delta_\sm)\right] du \; \nu_{a_-,a_+}^\alpha(dz).
\end{align*}
First, we obviously have $\lim_{\eta\to 0} \E[\phi_\eta(\Delta_t)]
=\E[|\Delta_t|^\beta]$ and $\lim_{\eta\to 0} \phi_\eta(x-\tx)=|x-\tx|^\beta$.
Next, integrating in $u$ as previously and 
recalling Lemma \ref{hyperchiant2}, we obtain
\begin{align*}
B^\eta_s =& (\gamma(Y_s)-\gamma(\tY_s))_+ \intrs
\left[\phi_\eta\left(\Delta_s+z\right)-\phi_\eta(\Delta_s)\right]
\nu_{a_-,a_+}^\alpha(dz)\ala
&+(\gamma(\tY_s)-\gamma(Y_s))_+ \intrs
\left[\phi_\eta\left(\Delta_s-z\right)-\phi_\eta(\Delta_s)\right]
\nu_{a_-,a_+}^\alpha(dz)\\
=&(\gamma(Y_s)-\gamma(\tY_s))_+ J^{\alpha,\beta,\eta}_{a_+,a_-}(\Delta_s) 
+(\gamma(\tY_s)-\gamma(Y_s))_+ J^{\alpha,\beta,\eta}_{a_-,a_+}(\Delta_s).
\end{align*}
For the first integral, we have used the substitution
$x=-z$, so that $\nu_{a_-,a_+}^\alpha(dz)=\nu_{a_+,a_-}^\alpha(dx) $. 
Using finally  Lemma \ref{hyperchiant2}, 
that $\gamma$ is H\"older continuous with index $\theta\in
[\alpha-\beta,\alpha]$, Proposition
\ref{weak}-(ii) (since $\theta+\beta-\alpha \in [0,\alpha)$)
and the Lebesgue dominated convergence theorem, we
deduce that 
\begin{align*}
&\lim_{\eta\to 0} \intot \E \left[B^\eta_s\right]ds\\
=& \intot \E\left[(\gamma(Y_s)-\gamma(\tY_s))_+|\Delta_s|^{\beta-\alpha}
[\indiq_{\{\Delta_s>0\}}
I^{\alpha,\beta}_{a_+,a_-} + \indiq_{\{\Delta_s<0\}}I^{\alpha,\beta}_{a_-,a_+}
] \right] ds \\
&+\intot \E\left[(\gamma(\tY_s)-\gamma(Y_s))_+|\Delta_s|^{\beta-\alpha}
[\indiq_{\{\Delta_s>0\}}
I^{\alpha,\beta}_{a_-,a_+} + \indiq_{\{\Delta_s<0\}}I^{\alpha,\beta}_{a_+,a_-}
] \right] ds
\end{align*}
as desired.
\end{proof}

We can now give the 

\begin{preuve} {\it of Theorem \ref{mr2}.}
We consider $\alpha \in (0,1)$, $a_-\leq a_+$ and 
two solutions $(Y_t)_{t\geq 0}$ and $(\tY_t)_{t\geq 0}$ to (\ref{sde2}),
issued from $x$ and $\tx$. We also fix $\beta \in (0,\alpha)$.
We put $\Delta_t=Y_t-\tY_t$. Applying Lemma \ref{ito2} and recalling
that $\gamma$ is H\"older continuous with index $\alpha$, a rough
upperbound using only that 
$|I^{\alpha,\beta}_{a_-,a_+}|+|I^{\alpha,\beta}_{a_+,a_-}|<\infty$
yields that $\E[|\Delta_t|^\beta] \leq |x-\tx|^\beta + C \intot 
\E[|\Delta_s|^\beta] ds$ and we conclude with the Gronwall Lemma.
\end{preuve}

We conclude this section with the

\begin{preuve} {\it of Theorem \ref{mrsuper}.}
Let us thus assume that $\alpha\in (1/2,1)$, that $a_-<a_+$
with $a_-/a_+ < -\cos (\pi\alpha)$ and let us set $\beta=\beta(\alpha,a_-/a_+)
\in (0,\alpha)$.
Consider 
two solutions $(Y_t)_{t\geq 0}$ and $(\tY_t)_{t\geq 0}$ to (\ref{sde2}),
issued from $x$ and $\tx$ and put $\Delta_t=Y_t-\tY_t$.
Applying Lemma \ref{ito2} ($\gamma$ is H\"older-continuous with index
$\alpha-\beta$ by assumption) and recalling that $I_{a_-,a_+}^{\alpha,\beta}=0$
due to Lemma \ref{basic}, we get
\begin{align*}
\E[|\Delta_t|^\beta]\leq &|x-\tx|^\beta + \beta \kappa_0 \intot \E\left[
|\Delta_s|^{\beta}\right] ds + \intot 
\E\left[B^{\eta,1}_s+B^{\eta,2}_s \right] ds,
\end{align*}
(with an equality and $\kappa_0=0$ if $b$ is constant), where
\begin{align*}
B^{\eta,1}_s=& I^{\alpha,\beta}_{a_+,a_-}
(\gamma(Y_s)-\gamma(\tY_s))_+|\Delta_s|^{\beta-\alpha}\indiq_{\{\Delta_s>0\}},
\\
B^{\eta,2}_s=& I^{\alpha,\beta}_{a_+,a_-}(\gamma(\tY_s)-\gamma(Y_s))_+
|\Delta_s|^{\beta-\alpha}\indiq_{\{\Delta_s<0\}}.
\end{align*}

{\it Step 1.} We now prove point (i). Our assumption on $\gamma$
guarantees that if $\Delta_s>0$, then $(\gamma(Y_s)-\gamma(\tY_s))_+
\leq \kappa_1 |\Delta_s|^\alpha$, whence $B^{\eta,1}_s \leq C |\Delta_s|^\beta$.
Similarly,  $B^{\eta,2}_s \leq C |\Delta_s|^\beta$. We thus find
$\E[|\Delta_t|^\beta]\leq |x-\tx|^\beta + C\intot \E\left[
|\Delta_s|^{\beta}\right] ds$ and we conclude with the Gronwall Lemma.

\vip

{\it Step 2.} We now check point (ii), assuming that $b$ is constant
and that $\gamma$ is non-increasing. Then $\Delta_s>0$ implies
$\gamma(Y_s)-\gamma(\tY_s)\leq 0$, whence $B^{\eta,1}_s=0$.
Similarly,  $B^{\eta,2}_s=0$ and we obtain 
$\E[|\Delta_t|^\beta]= |x-\tx|^\beta$ as desired.
\end{preuve}

\section{Weak existence and equivalence of the two equations}\label{ch}

We start this section with the equivalence in law
between (\ref{sde}) and (\ref{sde2}).

\begin{preuve} {\it of Lemma \ref{eq}}. We fix $\alpha \in (0,1)$, 
$a_-,a_+ \in [0,\infty)$ and we start with point (i).
We thus consider a solution 
$(Y_t)_{t\geq 0}$
to (\ref{sde2}) driven by a Poisson measure $M$ with intensity measure
$ds \; \nu_{a_-,a_+}^\alpha(dz) \; du$. 
Recall that $\gamma(x)=\sg(\sigma(x)).|\sigma(x)|^\alpha$.
Set
\begin{align*}
Z_t=&\intot \intrs \intrs \Big\{ \indiq_{\{\sigma(Y_\sm)\ne 0\}} 
\frac{z}{\sigma(Y_\sm)}[\indiq_{\{0<u< \gamma(Y_\sm)\}} 
-\indiq_{\{\gamma(Y_\sm)<u<0\}}]\\
&\hskip5cm +\indiq_{\{\sigma(Y_\sm)= 0\}} z \indiq_{\{0<u<1\}}
\Big\}M(dsdzdu).
\end{align*}
Then we obviously have 
\begin{align*}
\intot \sigma(Y_\sm)dZ_s=
\intot\intrs\intrs z [\indiq_{\{0<u< \gamma(Y_\sm)\}} -\indiq_{\{\gamma(Y_\sm)<u<0\}}]
M(dsdzdu).
\end{align*}
It only remains to prove that $(Z_t)_{t\geq 0}$ is a 
$(\alpha,a_-,a_+)$-stable process. 
But $(Z_t)_{t\geq 0}$ is a pure jump
process without drift, so that we only need to check that,
for $J=\{s\in [0,\infty), \Delta Z_s\ne 0\}$, 
$\sum_{s>0} \indiq_{\{s\in J\}}\delta_{(s,\Delta Z_s)}$ is a 
Poisson measure
on $[0,\infty)\times \rr_*$ with intensity measure
$ds \; \nu^\alpha_{a_-,a_+}(dz)$. Denote by $q(dsdz)$ its compensator.
It is enough (see Jacod-Shiryaev \cite[Theorem 4.8 p 104]{js}) 
to show that $q(dsdz)=\nu^\alpha_{a_-,a_+}(dz)\;ds$. 
By Definition of
$(Z_t)_{t\geq 0}$, we clearly have (recall that $\sg(\sigma(x))=\sg(\gamma(x))$)
\begin{align*}
&\intot\intrs \phi(s,z)q(dsdz)\\
=& \intot
\intrs \intrs  \indiq_{\{\sigma(Y_s) >0 \}}
\phi(s,z/\sigma(Y_s))\indiq_{\{0<u<\gamma(Y_s)\}}  du \; 
\nu^\alpha_{a_-,a_+}(dz)\; ds\\
&+ \intot
\intrs \intrs  \indiq_{\{\sigma(Y_s) <0 \}}
\phi(s,-z/\sigma(Y_s))\indiq_{\{\gamma(Y_s)<u<0\}}  du \; 
\nu^\alpha_{a_-,a_+}(dz)\; ds\\
&+\intot \intrs \intrs
\indiq_{\{\sigma(Y_s) = 0 \}}\phi(s,z) \indiq_{\{0<u<1\}}  du \; 
\nu^\alpha_{a_-,a_+}(dz) \; ds.
\end{align*}
Integrating in $u$, we deduce that
\begin{align*}
\intot\intrs \phi(s,z)q(dsdz)=& \intot
\intrs \indiq_{\{\sigma(Y_s) >0 \}}
\phi(s,z/\sigma(Y_s))\gamma(Y_s)\nu^\alpha_{a_-,a_+}(dz) \; ds\\
&+ \intot
\intrs \indiq_{\{\sigma(Y_s) <0 \}}
\phi(s,-z/\sigma(Y_s))|\gamma(Y_s)|  \nu^\alpha_{a_-,a_+}(dz) \; ds\\
&+ \intot \intrs
\indiq_{\{\sigma(Y_s) = 0 \}}\phi(s,z) \nu^\alpha_{a_-,a_+}(dz)\; ds.
\end{align*}
We perform the substitution $x=z/|\sigma(Y_s)|$ in the two first integrals,
which yields that 
$\nu_{a_-,a_+}^\alpha(dz)=|\sigma(Y_s)|^{-\alpha}\nu_{a_-,a_+}^\alpha(dx)$.
Recalling that 
$|\sigma(Y_s)|^{-\alpha} |\gamma(Y_s)|=1$, 
we conclude that $\intot\intrs \phi(s,z)q(dsdz)=\intot 
\intrs \phi(s,z) \nu^\alpha_{a_-,a_+}(dz)\;ds$ as desired.

\vip

We now check point (ii). Let thus 
$(X_t)_{t\geq 0}$  
solve (\ref{sde}) with some $(\alpha,a_-,a_+)$-stable process $(Z_t)_{t\geq 0}$
Put $N=\sum_{s>0} \indiq_{\{s\in J\}}\delta_{(s,\Delta Z_s)}$,
which is a Poisson measure
on $[0,\infty)\times \rr_*$ with intensity measure
$ds \; \nu^\alpha_{a_-,a_+}(dz)$. On an enlarged probability space, 
we consider a Poisson measure $O(dsdzdu)$  on 
$[0,\infty)\times \rr_*\times \rr_*$  with intensity measure
$ds \; \nu^\alpha_{a_-,a_+}(dz) \; du$ such that $N(dsdz)=O(dsdz\times[0,1])$.
We finally introduce the random point measure $M(dsdzdu)$ on 
$[0,\infty)\times \rr_*\times \rr_*$ defined by
\begin{align*}
&\intot \intrs \intrs \varphi(s,z,u) M(dsdudz)\\
=&\intot \intrs \intrs \indiq_{\{\sigma(X_\sm)\ne 0\}}
\varphi(s,z|\sigma(X_\sm)|,u\gamma(X_\sm)) O(dsdudz)\\
&+ \intot \intrs \intrs \indiq_{\{\sigma(X_\sm)= 0\}}
\varphi(s,z,u) O(dsdudz).
\end{align*}
for all $\varphi$ smooth enough. Then we have
\begin{align*}
\intot \sigma(X_\sm)&dZ_s = \intot \intrs z \sigma(X_\sm) N(dsdz)\\
=& \intot \intrs \intrs z \sigma(X_\sm)\indiq_{\{u\in [0,1]\}} O(dsdzdu)\\
=& \intot \intrs \intrs  z .\sg(\sigma(X_\sm))\indiq_{\{\sigma(X_\sm)\ne 0\}}
\indiq_{\{u/\gamma(X_\sm)\in [0,1]\}} M(dsdzdu)\\
=&\intot \intrs z\left[\indiq_{\{0<u<\gamma(X_\sm)\}} 
- \indiq_{\{\gamma(X_\sm)<u<0\}} \right]M(dsdudz).
\end{align*}
We finally used that $\sg(\sigma(x))=\sg(\gamma(x))$
and that $\sigma(x)=0$ implies $\gamma(x)=0$. It thus only remains
to check that $M$ is a Poisson measure  with intensity 
measure 
$ds \; \nu^\alpha_{a_-,a_+}(dz) \; du$. Let us call $p$ the compensator
of $M$ and observe that
\begin{align*}
&\intot\intrs\intrs \phi(s,z,u) p(dsdzdu)\\
=&\intot\intrs\intrs \indiq_{\{\sigma(X_\sm)\ne 0\}}
\phi(s,z|\sigma(X_\sm)|,u\gamma(X_\sm)) du \; \nu^\alpha_{a_-,a_+}(dz) \;  ds \\
&+ \intot\intrs\intrs \indiq_{\{\sigma(X_\sm)= 0\}}
\phi(s,z,u) du \; \nu^\alpha_{a_-,a_+}(dz) \;  ds.
\end{align*}
Performing the substitution $v=z|\sigma(X_\sm)|$, $w=u\gamma(X_\sm)$
and recalling that $|\sigma(X_\sm)|^{-\alpha}|\gamma(X_\sm)|=1$, 
we easily conclude
that $p(dsdzdu)=ds \; \nu^\alpha_{a_-,a_+}(dz)\; du$, which ends the proof.
\end{preuve}

We finally end this paper with weak existence and moment estimates
for (\ref{sde}).

\begin{preuve} {\it of Proposition \ref{weak}.} 
Let us for example treat the case where $\alpha \in (1,2)$.
Consider the equation
\begin{align}\label{edssgs}
Y_t=x+\intot \int_{-1}^1 \sigma(Y_\sm)z \tN(dsdz) + \intot c(Y_s)ds,
\end{align}
where $c(x)=b(x)- \sigma(x) \int_{|z|\geq 1} z \nu_{a_-,a_+}^\alpha(dz)$.
If $b$ and $\sigma$ have at most linear growth, one immediately
checks, using that $\int_{-1}^1 z^2 \nu_{a_-,a_+}^\alpha(dz)<\infty$, 
that for any $T>0$, for some constant $C_T$ not depending on $x$, any solution
to (\ref{edssgs}) satisfies 
\begin{align}\label{fme}
\E\left[\sup_{[0,T]} Y_t^2\right] \leq C_T (1+x^2).
\end{align}
If furthermore $b$ and $\sigma$ are continuous, we can apply
Theorem 175 of Situ \cite{s}
and thus weak existence holds for (\ref{edssgs}). 
Rewrite now (\ref{sde}) as 
\begin{align}\label{sdeprime}
X_t=x+\intot \int_{-1}^1 \sigma(X_\sm)z\tN(dsdz) + \intot c(X_s)ds 
+ \intot \int_{|z|>1} \sigma(X_\sm)zN(dsdz).
\end{align}
Observe that the last integral generates jumps at some discrete instants:
rewrite the restriction of $N$ to $[0,\infty)\times\{|z|\geq1\}$
as $\sum_{n\geq 1} \delta_{(T_n,Z_n)}$, where $0<T_1<T_2<\dots$ are the jump
instants of a Poisson process with parameter 
$\lambda=\int_{|z|\geq 1} \nu_{a_-,a_+}^\alpha(dz)$ and where the random variables
$(Z_n)_{n\geq 1}$ are i.i.d. with law $\lambda^{-1}  \nu_{a_-,a_+}^\alpha(dz)$.
Hence
(\ref{sde}) reduces to (\ref{edssgs}) on each time interval $(T_n,T_{n+1})$.
One classically deduces that weak existence for 
(\ref{edssgs}) implies weak existence for (\ref{sde}),
see Ikeda-Watanabe \cite{iw} for similar considerations. 

\vip

We now prove the moment estimates. We have not found a direct proof
relying on stochastic calculus.
Fix $\beta \in (0,\alpha)$, $T>0$
and assume only that $b,\sigma$ have at most linear growth.
Consider a solution $(X_t)_{t\geq 0}$ to (\ref{sde}) and rewrite it as
in (\ref{sdeprime}).
Denote by $\cG=\sigma(T_1,T_2,\dots)$.
Then $X_t$ solves (\ref{edssgs}) during $[0,T_1)$. Hence we have
$$
\E\left[\left.\sup_{[0,T_1\land T)}X_t^2 \right\vert \cG\right]\leq C_T(1+x^2)
\quad \hbox{whence} \quad 
\E\left[\left.\sup_{[0,T_1\land T)}|X_t|^\beta  \right\vert \cG\right]
\leq K_T(1+|x|^\beta).
$$
Furthermore, $X_{T_1}=X_{T_1-}+\sigma(X_{T_1-})Z_1$, whence, since $\sigma$ 
has at most linear growth, 
$|X_{T_1}|^\beta\leq L(1+|X_{T_1-}|^\beta)(1+|Z_1|^\beta)$.
Consequently, we have
$$
\E\left[\left.\sup_{[0,T_1\land T]}|X_t|^\beta  
\right\vert \cG\right]\leq M_T(1+|x|^\beta)
$$
where $M_T=K_T+L(1+K_T)\E[1+|Z_1|^\beta]<\infty$ 
(here we need that $\beta<\alpha$ to have $\E[|Z_1|^\beta]<\infty$).
Exactly in the same way,
since $(X_t)_{t\geq 0}$ solves (\ref{edssgs}) 
during $(T_k,T_{k+1})$ for any $k\geq 1$, 
one can prove that
$$
\E\left[\left.\sup_{[T_{k}\land T,T_{k+1}\land T]}|X_t|^\beta \right\vert \cG\right]
\leq M_T(1+\E[|X_{T_{k}}|^\beta\vert \cG])
$$
with the same constant $M_T$.
Put $u_k=\E[\sup_{[T_{k}\land T,T_{k+1}\land T]}|X_t|^\beta\vert \cG]$
for $k\geq 0$ (set $T_0=0$).
We have proved that $u_0 \leq M_T(1+|x|^\beta)$ and that 
$u_{k+1} \leq M_T(1+u_k)$. We classically deduce that for some
constant $A_T>1$, depending on $x$, $u_k \leq A_T^{k+1}$. Consequently,
for any $k\geq 1$,
$$
\E\left[\left.\sup_{[0,T_k\land T]} |X_t|^\beta\right\vert \cG\right] 
\leq u_0+\dots+u_{k-1}
\leq A_T+\dots+A_T^k \leq \frac{ A_T^{k+1}}{A_T-1}.
$$
Finally, we find
\begin{align*}
\E\left[\sup_{[0,T]} |X_t|^\beta\right] \leq& \sum_{k \geq 0} 
\E\left[\indiq_{\{T_k<T<T_{k+1}\}} 
\E\left(\left.\sup_{[0,T_{k+1}\land T]} |X_t|^\beta\right\vert \cG \right)\right]\\
\leq & \frac{1}{A_T-1}\sum_{k \geq 0} A_T^{k+1} \frac{(\lambda T)^k}{k!} 
e^{-\lambda T} <\infty.
\end{align*}
This concludes the proof.
\end{preuve}

\end{document}